\newcommand{\eps}{\varepsilon}
\def\bd{\textrm{bd}}
\def\int{\textrm{int}}
\def\domain{\mathcal{D}}
\def\diam{\textrm{diam}}
\def\I{\mathcal{I}}
\def\S{\mathcal{S}}
\def\P{\mathcal{P}}
\def\R{\mathbb{R}}
\def\G{\mathcal{G}}
\def\V{\mathcal{V}}
\def\E{\mathcal{E}}
\def\O{\mathcal{O}}
\def\U{\mathcal{U}}
\def\N{\mathbb{N}}
\def\resR{\mathcal{R}}
\def\C{\mathbb{C}}
\def\Ball{\textrm{B}}
\newcommand{\NonW}[2]{\ensuremath{\mathrm{NonW}\left(#1; #2 \right) }}
\newcommand{\Per}[2]{\ensuremath{\mathrm{Per}\left(#1; #2 \right) }}
\newtheorem{theorem}{Theorem}
\newtheorem{lemma}[theorem]{Lemma}
\newtheorem{proposition}[theorem]{Proposition}
\theoremstyle{definition}
\newtheorem{definition}[theorem]{Definition}
\theoremstyle{remark}
\newtheorem{remark}[theorem]{Remark}
\numberwithin{theorem}{section}
\numberwithin{equation}{section}
\providecommand{\norm}[1]{\Vert#1\Vert}
\title{Local stability implies global stability for the 2-dimensional Ricker map}
\author[$\dagger \ddagger$]{Ferenc A. Bartha}
\author[$\ddagger$]{{\'A}bel Garab}
\author[$\ddagger \star$]{Tibor Krisztin\footnote{Corresponding author. Email: krisztin@math.u-szeged.hu}}
\affil[$\dagger$]{\small CAPA group, Department of Mathematics, University of Bergen, Bergen, Norway}
\affil[$\ddagger$]{\small Bolyai Institute, University of Szeged, Szeged, Aradi v. tere 1, 6720, Hungary}
\affil[$\star$]{\small Analyis and Stochastics Research Group of the Hungarian Academy of Sciences,\\ Bolyai Institute, University of Szeged}
\date{}
\begin{document}

\maketitle

\begin{abstract}
\noindent
Consider the difference equation $x_{k+1}=x_k e^{\alpha-x_{n-d}}$ where $\alpha$ is a positive parameter and $d$ is a nonnegative integer. The case $d=0$ was introduced by W.E. Ricker in 1954. For the delayed version $d\ge 1$ of the equation S. Levin and R. May  conjectured in 1976 that local stability of the nontrivial equilibrium implies its global stability. Based on rigorous, computer aided calculations and analytical tools, we prove the conjecture for $d=1$.\\

\noindent
\textbf{Keywords}: global stability; rigorous numerics; Neimark--Sacker bifurcation; graph representations; interval analysis; discrete-time single species model\\

\noindent
\textbf{2010 Mathematics Subject Classification}: 39A30, 39A28, 65Q10, 65G40, 92D25

\end{abstract}

\section{Introduction}
In 1954, Ricker \cite{Ricker_Map} introduced the difference equation
\begin{equation}
  \label{ricker-eq-normal}
  x_{k + 1} = x_k e^{\alpha - x_k}
\end{equation}
with a positive parameter $\alpha$ to model the population density of a single species with non-overlapping generations. The function $R_1:\mathbb{R}\ni x\mapsto xe^{\alpha-x}\in \mathbb{R}$ 
is called the 1-dimensional Ricker map. 
$R_1$ has two fixed points: 0 and $\alpha$. 
It is not difficult to show that $x=\alpha$ is stable if and only if $0<\alpha\le 2$, and, for $0<\alpha\le 2$, $x=\alpha$ attracts all points from $(0,\infty)$; or equivalently, the equilibrium $x=\alpha$ of equation \eqref{ricker-eq-normal} is globally stable provided it is locally stable. 

In 1976, Levin and May \cite{LM} considered the case when there are explicit time lags in the density dependent regulatory mechanisms. This leads to  the difference-delay equation of order $d+1$: 
\begin{equation}
  \label{ricker-eq-delayed}
  x_{k + 1} = x_k e^{\alpha - x_{k-d}},
\end{equation}
where $d$ is a positive integer.

The map 
$$
R_{d+1}: \mathbb{R}^{d+1}\ni (x_0,\ldots,x_d)^T\mapsto 
(x_1,\ldots,x_d,x_de^{\alpha-x_0})^T\in\mathbb{R}^{d+1}
$$
is called the $(d+1)$-dimensional Ricker map; here $T$ denotes the transpose of a vector. 
$R_{d+1}$ has 2 fixed points in $\mathbb{R}^{d+1}$: $(0,\ldots,0)^T$ and $(\alpha,\ldots,\alpha)^T$. 
Levin and May \cite{LM} conjectured in 1976 that local stability of the  
fixed point $(\alpha,\ldots,\alpha)^T\in \mathbb{R}^{d+1}$ implies its global 
stability in the sense that all points from $\mathbb{R}_+^{d+1}:=(0,\infty)^{d+1}$ are attracted by $(\alpha,\ldots,\alpha)^T$. 
As far as we know, the conjecture is still open for all integers $d\ge 1$. 
The aim of the present paper is to prove the conjecture for $d=1$.

Levin and May's conjecture and  many other numerical and analytical studies suggested the folk theorem that `The local stability of the unique positive equilibrium of a single species model implies its global stability.' This claim was recently disproven by a counterexample of Jim{\'e}nez L{\'o}pez \cite{JL, JLP} on global attractivity for Clark's equation \cite{C} when the delay is at least 3.

Liz, Tkachenko and Trofimchuk \cite{LTT} proved that if
\begin{equation}\label{ricker-eq-ltt}
  0<\alpha < \frac{3}{2(d + 1)}
\end{equation}
then the fixed point $(\alpha,\ldots,\alpha)^T\in \mathbb{R}^{d+1}$ of $R_{d+1}$ is globally asymptotically stable, where globally means that the region of attraction of $(\alpha,\ldots,\alpha)^T$ is  $\mathbb{R}_+^{d+1}$. They also suggested that condition \eqref{ricker-eq-ltt} can be replaced by 
\begin{equation}
  \label{ricker-eq-tt}
  0<\alpha < \frac{3}{2(d + 1)} + \frac{1}{2(d + 1)^2},
\end{equation}
which was proven by Tkachenko and Trofimchuk in \cite{TT}. This result is a strong support of the conjecture of Levin and May, and it is proven for a class of maps, not only for $R_{d+1}$. 
For the 1-dimensional Ricker map $R_1$, condition \eqref{ricker-eq-tt} with $d=0$ gives the region $0<\alpha<2$. 
For $d = 1$, i.e., for the 2-dimensional Ricker map $R_2$, condition \eqref{ricker-eq-tt} is equivalent to $0<\alpha<0.875$. See also \cite{L1} and \cite{L2} in the topic.

Linearising $R_2$ at the fixed point $(\alpha,\alpha)^T$ shows that local 
exponential stability of $(\alpha,\alpha)^T$ holds for $0<\alpha<1$, and 
  $(\alpha,\alpha)^T$ is unstable for $\alpha>1$.
As $\alpha$ passes the value 1, a Neimark--Sacker bifurcation takes place 
at the nontrivial fixed point.  
In this paper we show that global asymptotic stability is true also for the  
parameter values  $\alpha \in [0.875, 1]$. We emphasise that 
our result implies global stability at the critical parameter value 
$\alpha=1$ as well.

In case $d = 1$ the difference equation (\ref{ricker-eq-delayed}) is equivalent to the 2-dimensional system
\begin{equation}
  \label{ricker-eq-main}
\begin{split}
  x_{k + 1} &= y_k, \\
  y_{k + 1} &= y_k e^{\alpha - x_k},
\end{split}
\end{equation}
and this is also equivalent to the 2-dimensional discrete dynamical system generated by the 2-dimensi\-onal Ricker map $R_2$. 
As $d=1$ will be fixed in the remaining part of the paper, we shall use the notation $F_\alpha$ instead of $R_2=R_2(\alpha)$. 
$F_\alpha$ has two fixed points $(0,0)^T$ and $(\alpha,\alpha)^T$. 
From now on, we shall analyse the dynamics generated by $F_\alpha$ in the positive quadrant $\R^2_+=(0,\infty)\times(0,\infty)$. Note that $F_\alpha(\overline{\R^2_+})\subseteq \overline{\R^2_+}$ and $F_\alpha(\R^2_+)\subset \R^2_+$. 

We shall use a combination of analytic and computational arguments. The latter is done using interval arithmetics, that is a standard in the area of validated or rigorous numerics. Instead of calculating with numbers, we use intervals to control the errors introduced by the computer. After finishing a computation, the information we obtain is that the true result is contained in the result interval. We shall draw conclusions from that. An interval $[a]$ is represented as a pair of endpoints $[a^-, a^+]$. Having a set $S$ or a number $r$, we denote their interval enclosures by $[S]$ and $[r]$, respectively. The reader is referred to Moore \cite{Moore_Methods}, Alefeld \cite{Alefeld_Introduction}, Tucker \cite{Tucker_Lorenz}, \cite{Tucker_Validated}, Nedialkov, Jackson and Corliss \cite{Nedialkov_Validated} for further details.

The structure of the paper is as follows. 
In Section \ref{invariant} we construct a compact region $S$, which is a closed square around  $(\alpha,\alpha)^T$, having the property that $F_\alpha(S)\subseteq S$ and every trajectory enters it eventually.
In Section \ref{neighbourhood} we construct an attracting neighbourhood of the fixed point $(\alpha,\alpha)^T$. We use two different approaches. First, 
  the linear approximation of $F_\alpha$ is applied at the fixed point. Naturally, the size of this neighbourhood tends to $0$ as the parameter $\alpha$ tends to the critical value 1. 
Then we analyse the normal form of $F_\alpha$ used in the study of the Neimark--Sacker bifurcation, and obtain a uniform neighbourhood in the parameter range $\alpha\in[0.999,1]$ belonging to the basin of attraction of the nontrivial fixed point $(\alpha,\alpha)^T$. 
In addition to the standard techniques applied in the Neimark--Sacker bifurcation, we need explicit estimations on the sizes of the higher order terms in order to get a sufficiently large attracting neighbourhood of the fixed point  $(\alpha,\alpha)^T$. 
In Section \ref{graph} we give an overview on graph representations of discrete dynamical systems and show how it can be used to study qualitative properties of dynamical systems. 
For possible future applications we formulate two approaches for general 
continuous maps in Euclidean spaces. 
In particular, the correctness of an algorithm is verified in order to enclose 
non-wandering points.    
In Section \ref{application} we combine the computational techniques of Section \ref{graph}  and rigorously show that every trajectory of $F_\alpha$ starting from $\mathbb{R}_+^2$ enters the neighbourhood constructed 
in Section  \ref{neighbourhood}. 
This proves our main result:

\begin{theorem}\label{maintheorem}
If $0<\alpha\leq 1$, then $(\alpha,\alpha)^T$ is locally stable,  and $F_{\alpha}^n(x,y)\to(\alpha,\alpha)^T$ as $n \to \infty$ for all $(x,y)^T\in \R^2_+$.
\end{theorem}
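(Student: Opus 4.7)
The plan is to split the proof into the local stability statement and the global attractivity statement, and within the latter to distinguish the ``easy'' parameter range, where analytical estimates suffice, from the ``hard'' parameter range near the Neimark--Sacker bifurcation. Local exponential stability of $(\alpha,\alpha)^T$ for $0<\alpha<1$ is immediate from linearisation, since both eigenvalues of $DF_\alpha(\alpha,\alpha)$ lie strictly inside the unit disc. At the critical value $\alpha=1$ the linearisation alone is inconclusive, so one has to descend to the normal form of $F_1$ at the Neimark--Sacker bifurcation and verify that the resonance terms make the fixed point asymptotically stable; the needed quantitative control on the higher order terms is precisely what Section~\ref{neighbourhood} announces.

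For global attractivity the starting remark is that the result \eqref{ricker-eq-tt} of Tkachenko and Trofimchuk already covers $0<\alpha<0.875$, so the real work is the parameter interval $[0.875,1]$. Here I would proceed in three stages, mirroring the plan sketched in the introduction. First, following Section~\ref{invariant}, I would construct a compact square $S$ centred at $(\alpha,\alpha)^T$ with $F_\alpha(S)\subseteq S$ and show, using the unimodal shape of $y\mapsto ye^{\alpha-x}$ together with explicit estimates on $F_\alpha^2$, that every trajectory in $\R^2_+$ enters $S$ after finitely many steps. Second, for each admissible $\alpha$ I would build an attracting neighbourhood $U=U(\alpha)$ of $(\alpha,\alpha)^T$: when $\alpha$ is bounded away from $1$ a quadratic Lyapunov function produced from the linearisation suffices, while for $\alpha$ close to $1$ the normal form estimates of Section~\ref{neighbourhood} supply a neighbourhood whose size does not collapse as $\alpha\nearrow 1$. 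Third, by Section~\ref{application}, it remains to prove rigorously that every orbit starting in $S\setminus U$ enters $U$ in finitely many steps.

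This last step is where the graph representation machinery of Section~\ref{graph} enters. One covers $S\setminus U$ by a finite collection of small boxes, constructs a directed graph whose edges record which boxes can reach which boxes under a rigorous interval enclosure of $F_\alpha$, and verifies that no forward-invariant subset of this graph avoids $U$; equivalently, the non-wandering enclosure algorithm must return a set contained in $\{(\alpha,\alpha)^T\}\cup U$. Because $\alpha$ is a continuous parameter, this is not carried out at isolated values but on a finite cover of $[0.875,1]$ by parameter intervals $[\alpha]$, with all box computations run in interval arithmetic so that the resulting conclusion is valid uniformly for every $\alpha$ in each subinterval.

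The main obstacle, I expect, is the tension between the two competing limits as $\alpha\to 1^-$: the local attracting neighbourhood produced by the linearisation shrinks to a point, while the invariant square $S$ and the combinatorial cost of the graph-based covering of $S\setminus U$ grow. Resolving this requires the normal form analysis of Section~\ref{neighbourhood} to produce a neighbourhood $U$ large enough that the complement $S\setminus U$ can still be handled with a feasible number of boxes; without such a uniform neighbourhood, the rigorous enumeration near $\alpha=1$ would be out of reach, and the critical case $\alpha=1$ itself would remain inaccessible even in principle.
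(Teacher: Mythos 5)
Your proposal follows essentially the same route as the paper: a forward-invariant compact square absorbing $\R^2_+$, a linearisation-based attracting neighbourhood for $\alpha$ bounded away from $1$ together with a Neimark--Sacker normal-form estimate giving a uniform-size neighbourhood near $\alpha=1$ (including $\alpha=1$ itself, where linearisation is inconclusive), and finally a rigorous interval-arithmetic graph computation over a finite cover of the parameter range to exclude any other non-wandering points in the square. The only cosmetic deviation is that you invoke Tkachenko--Trofimchuk for $\alpha<0.875$ whereas the paper re-derives that range (analytically for $\alpha\le 0.5$, computationally for $[0.5,0.875]$) for completeness.
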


For the sake of completeness, here we give a proof for all $\alpha \in (0,1]$. The result is new only for $\alpha\in[0.875,1]$.

There is an appendix containing some large formulae used in Section \ref{neighbourhood}.  The program codes and results of our rigorous computer aided computations can be found on link \cite{web}.

Consideration of a bifurcation of a given dynamical system is usually used to show that some phenomenon appears in the global dynamics of the system as a parameter passes a critical value. The invention in our method is that we use the normal form of a bifurcation in combination with the tools of graph representations of dynamical systems and interval arithmetics to prove the absence of a phenomenon for certain parameter values near the critical one. 
As we want to construct explicitly given and computationally useful regions, 
the key technical difficulty is the estimation of the sizes of the higher order (error) terms in the normal forms.  
We hope that our proof shows that these ideas are applicable for a wide range of  discrete or continuous dynamical systems, as well.

Running the program of D\'enes and Makay \cite{DM}, which is developed to (nonrigorously) find and visualise attractors and basins of discrete dynamical systems, suggests that the conjecture of Levin and May stands for the 3-dimensional Ricker model, as well. 
In order to prove the conjecture in this case and also for larger values of $d$, an additional technical difficulty arises. Namely, first a center manifold reduction is necessary, and the construction of an attracting neighbourhood 
should be done on the center manifold. Among others, an explicit estimation of the size 
of the center manifold will play a crucial role as well.  

\subsection*{Notations and definitions}
Throughout the paper some further notations and definitions will be used. 
$\mathbb{N}$, $\mathbb{N}_0$, $\mathbb{R}$, $\mathbb{C}$ stand for the set of 
positive integers, non-negative integers, real numbers, and complex numbers, respectively.  The open ball in the Euclidean-norm $\norm{.}$  and in the maximum norm with radius $\delta \geq 0$ around $q \in \R^n$ are denoted by $\Ball(q; \delta)$ and $\mathrm{K}(q;\delta)$, respectively.  In Section \ref{neighbourhood}  we shall often use the notation\linebreak $\mathrm{B}_\delta=\{z\in \C: \ |z|<\delta\}$ for $\delta>0$, where $|z|$ denotes the absolute value of $z\in\C$. For a vector $x\in \C^n$, $x^T$ denotes the transpose of $x$.  For $\xi=(\xi_1,\xi_2)^T\in \C^2$ and $\zeta=(\zeta_1,\zeta_2)^T\in\C^2$ let $\langle \xi,\zeta \rangle$ denote the scalar product of them defined by  $\langle \xi,\zeta\rangle=\overline{\xi_1}\zeta_1+\overline{\xi_2}\zeta_2$. Let also $\underline{\alpha}=(\alpha,\alpha)^T$.

Let $f:\domain_f\subseteq\mathbb{R}^n\to\mathbb{R}^n$ be a continuous map. 
For $k\in \mathbb{N}_0$, $f^k$ denotes the $k$-fold composition of $f$, i.e., $f^{k+1}(x)=f(f^k(x))$, and $f^0(x)=x$. A fixed point $p \in \domain_f$ 
of $f$ is called locally stable if for every $\varepsilon>0$ 
 there exists $\delta>0$ such that $\norm{x-p}<\delta$ implies $\norm{f^n(x)-p}<\varepsilon$ for all $n\in \mathbb{N}$. We say that the fixed point $p$ attracts the region $U\subseteq\domain_f$ if for all points $u\in U$,  $\norm{f^n(u)- p}\to 0$ as $n\to \infty$. The fixed point $p$ is globally attracting if it attracts all of $\domain_f$ and is globally stable if it is locally stable and globally attracting.

\section{The dynamics in the first quadrant}
\label{invariant}
In this section we construct compact squares $S_i^{(\alpha)}\subset\R^2_+, \ i\in\N_0$ around $\underline{\alpha}$ so that $F_\alpha(S_i^{(\alpha)})\subseteq S_i^{(\alpha)}$ and $S_i^{(\alpha)}$ attracts all points of $\R^2_+$  for all $i\in\N_0$ and $\alpha\in(0,1]$. Hence an elementary proof of Theorem \ref{maintheorem} is obtained for $0 < \alpha \leq 0.5$.
Recall $F_\alpha(\R^2_+) \subseteq \R^2_+$. We can illustrate the image $(x_{k+1}, y_{k+1})$ of $(x_k, y_k)$ as first going horizontally from $(x_k, y_k)$ to the diagonal, proceeding upwards if $0 < x_k < \alpha$, otherwise downwards vertically until we reach the value $y_{k+1} = y_k e^{\alpha - x_k}$.
This is shown on Figure \ref{ricker-fig-trapping}.
\begin{figure}[H]
  \centering
  \includegraphics[scale=0.9]{./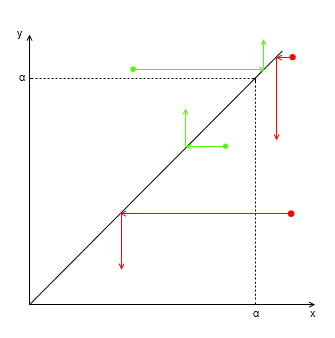}
  \caption{\label{ricker-fig-trapping}Dynamics for $x > 0, y > 0$}
\end{figure}
\noindent
Let $(x_0, y_0) \in \R^2_+$ and $0 < \alpha \leq 1$ be given. Define the sequence $(x_k, y_k)^\infty_{k = 0} \in \R^2_+$ by
\begin{equation*}
(x_{k+1}, y_{k+1}) = F_\alpha(x_k, y_k), \; k \in \{ 0, 1, \ldots \}. 
\end{equation*}
Consider the following cases:
\begin{enumerate}[I.]

  \item $0 < x_0 \leq \alpha \leq y_0$

  Clearly we have $\alpha \leq x_1 \leq y_1$ and $\max \{x_0, y_0, x_1, y_1\} \leq y_1$.

 \item $\alpha \leq x_0 \leq y_0$

  In this case $\alpha \leq x_1$ and $y_1 \leq x_1$ with $\max \{x_0, y_0, x_1, y_1\} \leq y_0$. We distinguish two cases depending on $y_1 \leq \alpha$ or not.

 \item $\alpha \leq y_0 \leq x_0$

  We obtain $\alpha \leq y_0 = x_1$ and $y_1 \leq y_0 = x_1$. During the consequent iterations $y_{i+1} \leq y_i = x_{i+1}$ is satisfied as long as $\alpha \leq x_i$ stays true. If $\alpha \leq x_i$ for all $i$, then $y_i > 0$ for all $i$, and both $(y_i)^\infty_{i = 0}$ and $(x_i)^\infty_{i = 0}$ are monotonically decreasing, and converge. The only possibility is $(x_k, y_k) \to (\alpha, \alpha)$, since the other fixed point is at $(0,0)$. Otherwise there is a minimal $N > 0$ such that $0 < y_N \leq x_N < \alpha$ is satisfied. We note that $0 < y_{N-1} < \alpha \leq x_{N-1}$ is true. We have $\max_{i \in \{0, \ldots, N\}} \{x_i, y_i\} < x_0$.

  \item $0 < y_0 < \alpha \leq x_0$

  Obviously $0 < y_1 \leq x_1 < \alpha$, and $\max \{x_0, y_0, x_1, y_1\} \leq x_0$.

  \item $0 < y_0 \leq x_0 \leq \alpha$

  Here we have $0 < x_1 \leq \alpha$, $0 < x_1 \leq y_1$ and $\max \{ x_0, y_0, x_1, y_1 \} \leq y_1$. We distinguish two cases depending on $\alpha \leq y_1$ or not.

  \item $0 < x_0 \leq y_0 \leq \alpha$

  Then $x_1 \leq \alpha$ and $x_1 \leq y_1$. Now $x_{i+1} = y_i \leq y_{i+1}$ is satisfied as long as $x_i \leq \alpha$ stays true. If $x_i \leq \alpha$ for all $i$, then $y_i > 0$ for all $i$, and both $(y_i)^\infty_{i = 0}$ and $(x_i)^\infty_{i = 0}$ are monotonically increasing, and converge. The only possibility is $(x_k, y_k) \to (\alpha, \alpha)$, since the other fixed point is at $(0,0)$. Otherwise there is a minimal $N > 0$ such that $\alpha < x_N \leq y_N$ is satisfied. We note that $0 < x_{N - 1} \leq \alpha < y_{N-1}$. We have $\max_{i \in \{0, \ldots, N\}} \{x_i, y_i\} < y_N$. 
\end{enumerate}
We conclude that for any $(x_0, y_0) \in \R^2_+$ the sequence $\left((x_k, y_k)\right)^\infty_{k = 0}$ converges to $(\alpha, \alpha)$ or enters the triangle $H(h_0^{(\alpha)}) = \{ (x, y) : h_0^{(\alpha)} < y \leq x \leq \alpha \}$, with $h_0^{(\alpha)} = 0$, that is the region denoted by $V$ in Figure \ref{ricker-fig-rotation}. We mark the possible transitions with arrows, if it is solid, it refers to transition in one step, otherwise it is possible to have multiple iterations before entering the next region.
\begin{figure}[H]
  \centering
  \includegraphics[scale=0.7]{./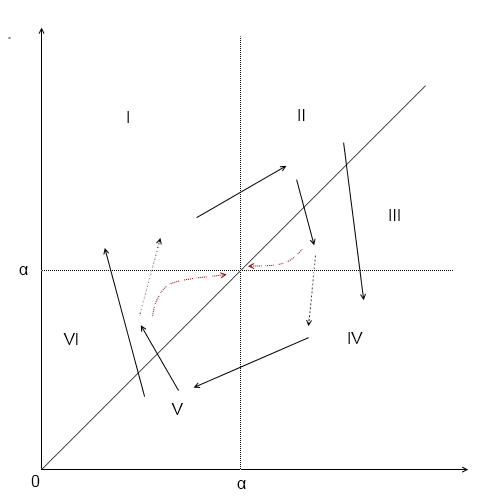}
  \caption{\label{ricker-fig-rotation}Dynamics for $x > 0, y > 0$}
\end{figure}
Assume now that $(x_0, y_0) \in H(h_0^{(\alpha)})$. Either the sequence converges to $(\alpha, \alpha)$ or there exists an $N > 1$ such that $(x_{N - 1}, y_{N - 1}) \in V \cup VI$, $(x_N, y_N) \in I$ and $(x_{N + 1}, y_{N + 1}) \in II$.
So the following stands:
\begin{equation*}
  y_{N + 1} = y_N e^{\alpha - x_N} = y_{N - 1} e ^ {2 \alpha - x_N - x_{N - 1}} \leq \alpha e^{2 \alpha - 2h_0^{(\alpha)}}.
\end{equation*}
This implies that $(x_{N + 1}, y_{N + 1}) \in G(g_0^{(\alpha)})$, where $G(g_0^{(\alpha)}) = \{ (x, y) : \alpha \leq x \leq y \leq g_0^{(\alpha)} \}$, with $g_0^{(\alpha)} = \alpha e^{2 \alpha - 2h_0^{(\alpha)}}$. Now there exists $M \geq N + 2$ such that $(x_{M - 1}, y_{M - 1}) \in II \cup III$, $(x_M, y_M) \in IV$ and $(x_{M + 1}, y_{M + 1}) \in V$. We get the following inequality:
\begin{equation*}
  y_{M + 1} = y_{M - 1} e ^ {2 \alpha - x_M - x_{M - 1}} \geq \alpha e^{2 \alpha - 2g_0^{(\alpha)}}.
\end{equation*}
Therefore $(x_{M + 1}, y_{M + 1}) \in H(h_1^{(\alpha)})$, with $h_1^{(\alpha)} = \alpha e^{2 \alpha - 2g_0^{(\alpha)}}$. Similarly, the sequence will enter the set $G(g_1^{(\alpha)})$, with $g_1^{(\alpha)} = \alpha e^{2 \alpha - 2h_1^{(\alpha)}}$. Repeating this argument we get two sequences $(h_i^{(\alpha)})^\infty_{i=0}$ and $(g_i^{(\alpha)})^\infty_{i=0}$ defined recursively by $h_0^{(\alpha)} = 0$, $h_i^{(\alpha)} = \alpha e^{2 \alpha - 2g_{i-1}^{(\alpha)}}$ for $i \geq 1$, and $g_i = \alpha e^{2 \alpha - 2h_i^{(\alpha)}}$ for $i \geq 0$. It is easy to see that $(h_i^{(\alpha)})$ increases and $(g_i^{(\alpha)})$ decreases. We have the limits $h_i^{(\alpha)} \nearrow h_\infty^{(\alpha)} \leq \alpha$ and $g_i^{(\alpha)} \searrow g_\infty^{(\alpha)} \geq \alpha$. Define
\begin{equation}
  \label{ricker-eq-trapping}
  S_i^{(\alpha)} = \{ (x, y) : h_i^{(\alpha)} \leq x \leq g_i^{(\alpha)}, h_i^{(\alpha)} \leq y \leq g_i^{(\alpha)} \}.
\end{equation}
We sum our observations in the following Lemma:
\begin{lemma}
  \label{ricker-lemma-trapping}
  For every $\alpha \in (0,1]$ and for every $i \in \N_0$
  \begin{enumerate}
   \item $F_\alpha(S_i^{(\alpha)}) \subseteq S_i^{(\alpha)}$,
   \item $S_i^{(\alpha)}$ attracts all points of $\R^2_+$.
  \end{enumerate}
\end{lemma}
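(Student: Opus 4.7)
The plan is to derive both claims directly from the case analysis and rotation argument carried out in the paragraphs preceding the lemma, converting those qualitative statements into a clean inductive invariance/attraction result.

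For part (2), I will formalise the rotation argument: given $(x_0, y_0)\in \R_+^2$ and a target index $i\in \N_0$, either $(x_k,y_k)\to \underline{\alpha}$ (in which case attraction to $S_i^{(\alpha)}$ is immediate, since $\underline{\alpha}\in S_i^{(\alpha)}$), or cases I--VI drive the orbit into $H(h_0^{(\alpha)})$ in finitely many steps, after which the rotation $H(h_0^{(\alpha)})\to G(g_0^{(\alpha)})\to H(h_1^{(\alpha)})\to G(g_1^{(\alpha)})\to \cdots$ supplies indices at which the orbit enters $G(g_j^{(\alpha)})$ and $H(h_{j+1}^{(\alpha)})$ for each successive $j$. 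Since $(h_j^{(\alpha)})$ increases and $(g_j^{(\alpha)})$ decreases, the inclusions $H(h_j^{(\alpha)})\cup G(g_j^{(\alpha)})\subseteq S_j^{(\alpha)}\subseteq S_i^{(\alpha)}$ hold for every $j\ge i$, so the orbit settles inside $S_i^{(\alpha)}$ after finitely many iterations.

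For part (1), I will prove forward invariance $F_\alpha(S_i^{(\alpha)})\subseteq S_i^{(\alpha)}$ by subdividing $S_i^{(\alpha)}$ according to the analogues of the six regions from the preceding case analysis and tracking the $F_\alpha$-image piece by piece. The first coordinate of $F_\alpha(x,y)=(y, y e^{\alpha-x})$ automatically lies in $[h_i^{(\alpha)}, g_i^{(\alpha)}]$, so the real content is to pin the second coordinate $y e^{\alpha-x}$ inside $[h_i^{(\alpha)}, g_i^{(\alpha)}]$. Here I would exploit the two-step identity $y_{k+1}=y_{k-1} e^{2\alpha-x_k-x_{k-1}}$ already used to derive $g_i^{(\alpha)}$ and $h_{i+1}^{(\alpha)}$: the recursions $g_i^{(\alpha)}=\alpha e^{2\alpha-2h_i^{(\alpha)}}$ and $h_{i+1}^{(\alpha)}=\alpha e^{2\alpha-2g_i^{(\alpha)}}$ were chosen precisely so that, over an ``ascent'' pass through $G(g_i^{(\alpha)})$ followed by a ``descent'' pass through $H(h_i^{(\alpha)})$, the exponential gain is absorbed by the exponential loss, keeping $y e^{\alpha-x}$ inside $[h_i^{(\alpha)}, g_i^{(\alpha)}]$.

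The main obstacle is exactly this mismatch between a one-step set-theoretic inclusion and the two-step recursion underlying $(h_i^{(\alpha)}, g_i^{(\alpha)})$: a crude one-step maximisation of $y\cdot e^{\alpha-x}$ over the square gives $g_i^{(\alpha)} e^{\alpha - h_i^{(\alpha)}}$, which a priori exceeds $g_i^{(\alpha)}$ whenever $h_i^{(\alpha)}<\alpha$. The proof therefore must keep track of the orbital structure implicit in the construction of $(h_j^{(\alpha)}, g_j^{(\alpha)})$, so that whenever the second coordinate of a single iterate threatens to overshoot, the preceding coordinate $x_{k-1}$ forces a partner bound that activates the two-step estimate $g_i^{(\alpha)}=\alpha e^{2\alpha-2h_i^{(\alpha)}}$. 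Verifying this absorption at every vertex of the six-region decomposition is the heart of part (1); once achieved, the remainder of the argument is routine bookkeeping using the monotonicity of $(h_i^{(\alpha)})$ and $(g_i^{(\alpha)})$ together with the inclusion $S_{i+1}^{(\alpha)}\subseteq S_i^{(\alpha)}$.
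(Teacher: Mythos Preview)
Your treatment of part~(2) is correct and is exactly the paper's rotation argument.

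For part~(1) there is a real gap, and your proposed fix does not close it. You correctly observe that the crude one-step bound $\max_{(x,y)\in S_i^{(\alpha)}} y\, e^{\alpha-x}=g_i^{(\alpha)}e^{\alpha-h_i^{(\alpha)}}$ exceeds $g_i^{(\alpha)}$ whenever $h_i^{(\alpha)}<\alpha$. But this is not an artefact of a crude estimate: it is witnessed by the corner point $(h_i^{(\alpha)},g_i^{(\alpha)})\in S_i^{(\alpha)}$, for which
\[
F_\alpha\bigl(h_i^{(\alpha)},g_i^{(\alpha)}\bigr)=\bigl(g_i^{(\alpha)},\,g_i^{(\alpha)}e^{\alpha-h_i^{(\alpha)}}\bigr)\notin S_i^{(\alpha)}.
\]
Thus the literal one-step inclusion $F_\alpha(S_i^{(\alpha)})\subseteq S_i^{(\alpha)}$ fails for every finite $i$ (and, when $\alpha>1/2$, even for $S_\infty^{(\alpha)}$). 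Your idea of ``tracking the orbital structure'' cannot rescue a set-theoretic inclusion: a bare point of the square carries no preceding coordinate $x_{k-1}$, and the two-step estimate $y_{N+1}=y_{N-1}e^{2\alpha-x_N-x_{N-1}}\le\alpha\, e^{2\alpha-2h_i^{(\alpha)}}=g_i^{(\alpha)}$ requires the extra hypothesis $y_{N-1}\le\alpha$ (the orbit sitting in region~V or~VI at time $N-1$), which is \emph{not} implied by $(x_{N-1},y_{N-1})\in S_i^{(\alpha)}$.

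The paper gives no separate argument for part~(1); it says ``We sum our observations in the following Lemma'', and those observations establish only part~(2). What the rotation argument \emph{does} prove---and what suffices downstream---is the weaker statement that every orbit, once it has entered $H(h_i^{(\alpha)})$, remains in $S_i^{(\alpha)}$ for all later times; combined with part~(2) this yields that every orbit eventually enters $S_i^{(\alpha)}$ and stays there. If you insist on an honestly forward-invariant region you must replace the square by a smaller set shaped by the six-region geometry, or else record only the orbit-wise statement.
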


If $S_\infty^{(\alpha)} = \{(\alpha, \alpha)\}$, then we have already established that the fixed point is globally attracting. This is not the case however for $\alpha > 0.5$. For a given $\alpha$, we can view the sequences $(h_i^{(\alpha)})$ and $(g_i^{(\alpha)})$ as even and odd iterates of the function
\begin{equation}
  \label{ricker-eq-trapping-func}
  \tau_\alpha(t) = \alpha e^{2 \alpha - 2 t},
\end{equation}
starting from $t = 0$. That is, $h_0^{(\alpha)} = \tau_\alpha^0(0), \; g_0^{(\alpha)} = \tau_\alpha^1(0), \; h_1^{(\alpha)} = \tau_\alpha^2(0), \; g_1^{(\alpha)} = \tau_\alpha^3(0), \; \ldots \; .$ The non-rigorous bifurcation diagram on Figure \ref{ricker-fig-K-iteration} shows that the unique fixed point $\alpha$ of $\tau_\alpha$ is stable for $0 < \alpha < 0.5$, it is unstable for $\alpha > 0.5$, and there is an attracting 2-cycle for $\alpha > 0.5$. Thus $S_\infty^{(\alpha)} = \{(\alpha, \alpha)\}$ can be expected only for $\alpha \in (0,0.5)$.  For $\alpha > 0.5$ the two points of the 2-cycle give $h_\infty^{(\alpha)}$ and $g_\infty^{(\alpha)}$.
\begin{figure}[H]
  \centering
  \includegraphics[scale=0.7]{./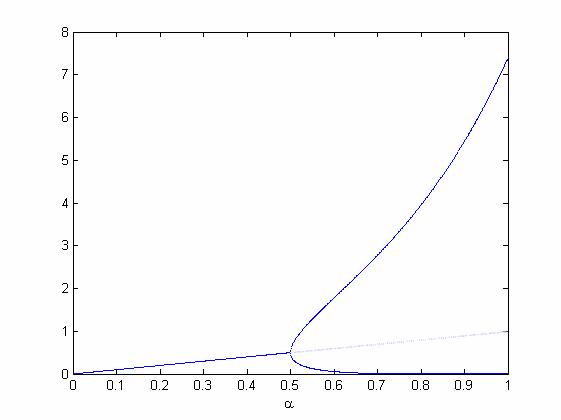}
  \caption{\label{ricker-fig-K-iteration}Bifurcation diagram of $\tau_\alpha$}
\end{figure}
\noindent
\begin{proposition}
  \label{ricker-thm-trapping}
  If $0 < \alpha \leq 0.5$, then for every $(x,y)\in \R^2_+$ we have $F_{\alpha}^n(x,y) \to (\alpha,\alpha)$ as $n \to \infty$.
\end{proposition}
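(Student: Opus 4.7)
The plan is to deduce the proposition directly from Lemma~\ref{ricker-lemma-trapping}. That lemma produces a family of forward-invariant squares $S_i^{(\alpha)}$, each of which attracts every point of $\R_+^2$; hence the intersection $S_\infty^{(\alpha)} = [h_\infty^{(\alpha)}, g_\infty^{(\alpha)}]^2$ contains the $\omega$-limit set of every trajectory of $F_\alpha$. It therefore suffices to prove that $h_\infty^{(\alpha)} = g_\infty^{(\alpha)} = \alpha$ when $0 < \alpha \leq 0.5$, and by the recursion for $(h_i^{(\alpha)}), (g_i^{(\alpha)})$, this reduces to the purely one-dimensional statement that the orbit of $0$ under $\tau_\alpha(t) = \alpha e^{2\alpha - 2t}$ converges to the unique positive fixed point $t = \alpha$.

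To carry this out I would first record the qualitative picture of $\tau_\alpha$: it is smooth and strictly decreasing on $\R$, so $\tau_\alpha^2$ is strictly increasing and fixes $\alpha$; moreover $\tau_\alpha'(\alpha) = -2\alpha$, so the multiplier $(\tau_\alpha^2)'(\alpha) = 4\alpha^2$ is at most $1$ precisely when $\alpha \leq 0.5$. Since $h_0^{(\alpha)} = 0 < \alpha$ and $\tau_\alpha^2$ is monotone with fixed point $\alpha$, the sequence $h_i^{(\alpha)} = \tau_\alpha^{2i}(0)$ is monotone increasing and bounded above by $\alpha$, hence converges to a limit $h_\infty^{(\alpha)} \in [0, \alpha]$ that is fixed by $\tau_\alpha^2$. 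Applying $\tau_\alpha$ yields $g_\infty^{(\alpha)} = \tau_\alpha(h_\infty^{(\alpha)})$, and together $(h_\infty^{(\alpha)}, g_\infty^{(\alpha)})$ form a (possibly degenerate) 2-cycle of $\tau_\alpha$. Everything hinges on ruling out non-trivial 2-cycles for $\alpha \in (0, 0.5]$.

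To exclude a non-trivial 2-cycle, the natural object is $\psi(t) = \tau_\alpha^2(t) - t$ on $[0, \alpha]$, and the goal is to show $t = \alpha$ is its only zero. One approach is direct: parameterize a hypothetical 2-cycle $\tau_\alpha(a) = b$, $\tau_\alpha(b) = a$ with $0 \leq a < \alpha < b$, rewrite using the explicit form of $\tau_\alpha$ as $a + b = 2\alpha + \tfrac{1}{2}\log(b/a)\cdot(\ldots)$, and derive a contradiction from $\alpha \leq 0.5$ by exploiting convexity of the exponential. A slicker alternative is to observe that a short calculation gives $S\tau_\alpha \equiv -2$ everywhere, so by Singer's theorem for maps with negative Schwarzian derivative and no critical points the existence of an attracting cycle other than the fixed point is excluded whenever the multiplier at $\alpha$ has modulus at most $1$. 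The main obstacle is the borderline value $\alpha = 0.5$: there $(\tau_\alpha^2)'(\alpha) = 1$, so $\alpha$ is a parabolic fixed point of $\tau_\alpha^2$, and one must check in addition that the higher-order terms of $\psi$ near $\alpha$ have the correct sign to force $\psi > 0$ on $[0, \alpha)$; an explicit Taylor expansion of $\tau_\alpha^2$ at $\alpha$ (whose quadratic coefficient is nonzero) should suffice to seal this last case.
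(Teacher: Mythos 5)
Your reduction is exactly the paper's: invoke Lemma~\ref{ricker-lemma-trapping}, observe that everything hinges on the one-dimensional iteration of $\tau_\alpha$, and show that $\alpha$ is the unique fixed point of $\tau_\alpha^2$ in $[\alpha,\infty)$ (equivalently, that there is no nontrivial $2$-cycle of $\tau_\alpha$). The structural observations you list are all correct, and your remark that $S\tau_\alpha\equiv -2$ is a nice fact that the paper does not use. However, the one step that actually does the work is left incomplete, and the Schwarzian route as you state it would not close the argument. Singer's theorem, for an interval map, says the immediate basin of any attracting or neutral periodic orbit must contain a critical point or a boundary point of the interval; here $\tau_\alpha$ has no critical points on $[0,\alpha e^{2\alpha}]$, so a hypothetical attracting $2$-cycle would have to absorb the boundary orbit --- but the boundary orbit (iterates of $0$) is precisely the thing whose limit we are trying to identify, so this is consistent with, not contradictory to, a nontrivial $2$-cycle. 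You would still need the minimum principle for $(\tau_\alpha^2)'$ plus a mean-value argument, and that is not spelled out. Your ``direct'' route gives $\log(b/a)=2(b-a)$, which by concavity of $\log$ forces $a<1/2<b$, but that alone is no contradiction; more work with the product relation would be required.

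The paper settles the $2$-cycle exclusion with a short calculus computation you did not anticipate, and it handles the parabolic case $\alpha=1/2$ with no case distinction. One computes
\[
\frac{d^2}{dt^2}\bigl(\tau_\alpha(\tau_\alpha(t))-t\bigr)=8\,\tau_\alpha(\tau_\alpha(t))\,\tau_\alpha(t)\bigl(2\tau_\alpha(t)-1\bigr),
\]
and notes that for $t>\alpha$ and $\alpha\le 1/2$ one has $\tau_\alpha(t)<\tau_\alpha(\alpha)=\alpha\le 1/2$, so the right-hand side is strictly negative; hence $\tau_\alpha^2(t)-t$ is strictly concave on $(\alpha,\infty)$. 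Since the first derivative at $t=\alpha$ equals $4\alpha^2-1\le 0$, strict concavity forces $\frac{d}{dt}(\tau_\alpha^2(t)-t)<0$ for all $t>\alpha$, so $\alpha$ is the only zero of $\tau_\alpha^2(t)-t$ in $[\alpha,\infty)$, giving $g_\infty^{(\alpha)}=\alpha$ and then $h_\infty^{(\alpha)}=\tau_\alpha(\alpha)=\alpha$. In particular, the ``main obstacle'' you flag at $\alpha=1/2$ dissolves: strict concavity, not the sign of the multiplier, carries the argument there, so no separate Taylor expansion is needed.
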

\begin{proof}
Observe that $\tau_\alpha'(t) = - 2 \tau_\alpha(t)$, $\tau_\alpha(t) > 0$, and if $t > \alpha$ and $\alpha \in (0, 0.5]$, then $2 \tau_\alpha(t) = 2 \alpha \leq 1$ is satisfied. Since
\begin{equation}
  \frac{d^2}{d t^2} \left( \tau_\alpha(\tau_\alpha(t)) - t \right) = 8 \tau_\alpha(\tau_\alpha(t)) \tau_\alpha(t) \left( 2 \tau_\alpha(t) - 1 \right),
\end{equation}
therefore $\tau_\alpha(\tau_\alpha(t)) - t$ is a concave function for $t > \alpha$. The first derivative at $\alpha \in (0, 0.5]$ is
\begin{equation}
  \left. \frac{d}{d t} \left( \tau_\alpha(\tau_\alpha(t)) - t \right) \right|_{t = \alpha} = 4 \tau_\alpha(\tau_\alpha(\alpha)) \tau_\alpha(\alpha) - 1 = 4 \alpha^2 - 1 \leq 0.
\end{equation}
These imply that $\frac{d}{d t} \left( \tau_\alpha(\tau_\alpha(t)) - t \right) < 0$ for all $t > \alpha$. Thus the only zero of $\tau_\alpha(\tau_\alpha(t)) - t$ in $t \in [\alpha, \infty)$ is $\alpha$. This gives us that $g_\infty^{(\alpha)} = \alpha$ which implies that $h_\infty^{(\alpha)} = \alpha$ and consequently $S_\infty^{(\alpha)} = \{(\alpha, \alpha)\}$ in this parameter region.
\end{proof}
\noindent
We assume $\alpha \in [0.5, 1]$ in the remaining part of the paper.

\section{Attracting neighbourhood}\label{neighbourhood}
Let us consider map $F_\alpha$. In this section we are going to give $\eps(\alpha)>0$, such that \linebreak
$\inf_{\alpha\in[0.5,1]}\eps(\alpha)>0$ and $\mathrm{K}(\underline{\alpha};\eps(\alpha))$ belongs to the basin of attraction of $\underline{\alpha}$ for $\alpha\in[0.5,1]$, that is, $F_\alpha^n(x_0,y_0)\to \underline{\alpha}$ as $n\to\infty$ for all $(x_0,y_0)^T\in \mathrm{K}(\underline{\alpha};\eps(\alpha))$.

Introducing the new variables $u=x-\alpha, \ v=y-\alpha$, map $F_\alpha$ can be written in the form
\begin{equation}\label{Ricker_beta_F}
\left(\begin{array}{c}
u\\
v  \end{array}\right)
\mapsto A(\alpha)\left(\begin{array}{ccc}
u\\
v\end{array}\right) + f_\alpha(u,v),
\end{equation}
where the linear part is
$$
A(\alpha)=\left(\begin{array}{ccc}
0 &1\\
-\alpha &1
\end{array}\right)$$
and the remainder is
$$
f_\alpha(u,v)=\left(\begin{array}{ccc}
0\\
v(e^{-u}-1)+\alpha(e^{-u}-1+u) \end{array}\right)
.$$
The eigenvalues of $A(\alpha)$ are $\mu_{1,2}(\alpha)=\frac{1\pm i\sqrt{4\alpha-1}}{2}\in\C$, and the corresponding complex eigenvectors are $q_{1,2}(\alpha)=
\left(\frac{1\mp\sqrt{1-4\alpha}}{2\alpha},1\right)^T=\left(\frac{1\mp i\sqrt{4\alpha-1}}{2\alpha},1\right)^T\in\C^2,$ respectively for $\alpha>\frac{1}{4}$. Let $q(\alpha)=q_1(\alpha)$ and $\mu(\alpha)=\mu_1(\alpha)$. Let $p(\alpha) \in \C^2$ denote the eigenvector of $A(\alpha)^T$ corresponding to $\overline{\mu(\alpha)}$ such that  $\langle p(\alpha),q(\alpha)\rangle=1,$. This leads to
$$p(\alpha)=\left(-\frac{i\alpha}{\sqrt{4\alpha-1}},\frac{\sqrt{4\alpha-1}+i}{2\sqrt{4\alpha-1}}\right)^T.$$
We introduce a complex variable
\begin{equation}\label{z(u,v)}
z=z(u,v,\alpha)=\langle p(\alpha),(u,v)^T\rangle=\frac{1}{2}\left(v-\frac{i (v-2 u \alpha )}{\sqrt{-1+4 \alpha }}\right).
\end{equation}
We also have an explicit formula for $(u,v)^T$ in terms of $z$, which reads as
\begin{equation}\label{uv(z)}
(u,v)^T=zq(\alpha)+\overline{zq(\alpha)}=
\left(\frac{\mathrm{Re}z+\sqrt{4\alpha-1}\mathrm{Im}z}{\alpha},2\mathrm{Re}z  \right)^T.
\end{equation}
Our original system \eqref{Ricker_beta_F} is now transformed into the complex system
\begin{equation}\label{Ricker_complex_z}
\begin{split}
z\mapsto G(z,\overline{z},\alpha)&=\left\langle p(\alpha),A(\alpha)(zq(\alpha)+\overline{zq(\alpha)})
+f_\alpha(zq(\alpha)+\overline{zq(\alpha)})\right\rangle\\
&=\mu(\alpha)z+g(z,\overline{z},\alpha),
\end{split}
\end{equation}
where $g$ is a complex valued smooth function of $z,\ \overline{z}$ and $\alpha$, defined by \eqref{app_g}. It can also be seen that for fixed $\alpha$, $g$ is an analytic function of $z$ and $\overline{z}$ and the Taylor expansion of $g$ with respect to $z$ and $\overline{z}$ has only quadratic and higher order terms. That is,
$$g(z,\overline{z},\alpha)=\sum_{2\leq k+l} \frac{1}{k!l!}g_{kl}(\alpha)z^k\overline{z}^l, \quad \text{with } k,l=0,1,\dots,$$
where $g_{kl}(\alpha)=\left.\frac{\partial^{k+l}}{\partial z^k \partial \overline{z}^l}g(z,\overline{z},\alpha)\right|_{z=0}$ for $k+l\geq 2,\ k,l=\{0,1,\dots\}$.

\begin{proposition}\label{thm_epsalpha}
Let  $\alpha \in [0.5,1)$ be fixed and
$$\varepsilon(\alpha)=\min\left\{\frac{1}{20}\sqrt{\frac{4\alpha-1}{2+\alpha}},
\frac{9(4\alpha-1)(1-\sqrt{\alpha})}{20(1+2\sqrt{\alpha})\sqrt{2+\alpha}}\right\}.$$
Then $\left\{(x,y)^T\in\R^2:|x-\alpha|<\eps(\alpha), |y-\alpha|<\eps(\alpha)\right\}$ belongs to the basin of attraction of the fixed point  $\underline{\alpha}$ of $F_\alpha$.
\end{proposition}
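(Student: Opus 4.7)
The approach is to work in the complex coordinate $z=\langle p(\alpha),(u,v)^T\rangle$ already set up above, in which the map takes the form $z\mapsto \mu(\alpha)z+g(z,\bar z,\alpha)$. A direct calculation gives $|\mu(\alpha)|^2=(1+(4\alpha-1))/4=\alpha$, so $|\mu(\alpha)|=\sqrt\alpha\in[1/\sqrt 2,1)$ for $\alpha\in[0.5,1)$; the linear part is therefore a strict contraction of every disk centred at $0$, and the task reduces to controlling the nonlinear remainder $g$ well enough to produce a strict contraction of the full map on a $z$-disk whose preimage in $(u,v)$-coordinates contains the square $\mathrm{K}(\underline\alpha;\eps(\alpha))$.

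First I would establish a quadratic bound for $g$. Rewriting the nontrivial component of $f_\alpha$ as $-uv+(v+\alpha)(e^{-u}-1+u)$ and combining the elementary estimates $|e^{-u}-1|\le|u|e^{|u|}$ and $|e^{-u}-1+u|\le\tfrac12 u^2 e^{|u|}$ with $|u|\le 2|z|/\sqrt\alpha$, $|v|\le 2|z|$ (read off from \eqref{uv(z)} using $|q_1(\alpha)|=1/\sqrt\alpha$ and $|p_2(\alpha)|=\sqrt{\alpha/(4\alpha-1)}$) produces an inequality of the form $|g(z,\bar z,\alpha)|\le C(\alpha)\,e^{|u|}\,|z|^2$ with $C(\alpha)$ of order $(4\alpha-1)^{-1/2}$.

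Next I would exploit $|\mu z+g|\le\sqrt\alpha|z|+|g|$ to reduce the contraction requirement $|\mu z+g|\le\lambda|z|$ for some fixed $\lambda\in(\sqrt\alpha,1)$ to $|g|\le(\lambda-\sqrt\alpha)|z|$, and hence (via the quadratic bound) to an explicit bound on $|z|$. Translation to the max-norm through the inverse relation $|z|^2=\tfrac14(v^2+(v-2u\alpha)^2/(4\alpha-1))$ from \eqref{z(u,v)} converts $|u|,|v|\le\eps$ into $|z|\le\tfrac{\eps}{2}\sqrt{\alpha(\alpha+2)/(4\alpha-1)}$. The first entry in the minimum defining $\eps(\alpha)$ is exactly the max-norm radius forcing $|z|\le\sqrt\alpha/40$, i.e.\ $|u|\le 1/20$, so that $e^{|u|}$ is pinned down by a universal constant close to $1$; the second entry then encodes the contraction condition itself once that factor has been absorbed. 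Iterating $|z_{k+1}|\le\lambda|z_k|$ gives $|z_k|\to 0$, which pulls back to $F_\alpha^n(x,y)\to\underline\alpha$.

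The main technical obstacle is constant-chasing. A naive triangle-inequality bound on $g$ as sketched above produces the factor $(2+\sqrt\alpha)$ in the denominator of the final expression, whereas the statement has $(1+2\sqrt\alpha)$; closing this gap will require either a slightly sharper grouping of terms in $g$ that exploits the cancellation between $-uv$ and the linear part of $(v+\alpha)(e^{-u}-1)$, or a careful tuning of $\lambda$ that absorbs the discrepancy together with $e^{1/20}$ and the coefficient $9/20$ simultaneously. Everything else is routine linear algebra and calculus. The bound necessarily degenerates as $\alpha\to 1^-$, since then $1-\sqrt\alpha\to 0$ and $\eps(\alpha)\to 0$, which is why the critical parameter $\alpha=1$ must be handled separately by the Neimark--Sacker normal-form analysis carried out later in Section~\ref{neighbourhood}.
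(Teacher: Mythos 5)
Your overall plan matches the paper's: pass to the complex coordinate $z=\langle p(\alpha),(u,v)^T\rangle$, bound $|g|$ quadratically in $|z|$, deduce a strict decrease $|G(z,\bar z,\alpha)|<|z|$ on a disk, and pull back to the max-norm square. However, there are two substantive problems.

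First, your proposed regrouping $v(e^{-u}-1)+\alpha(e^{-u}-1+u)=-uv+(v+\alpha)(e^{-u}-1+u)$ is not what the paper uses, and it is precisely what sends you to the factor $(2+\sqrt\alpha)$. The paper applies the triangle inequality directly to the unregrouped form, bounding $|v||e^{-u}-1|\le e^{1/10}|uv|$ and $\alpha|e^{-u}-1+u|\le\frac{e^{1/10}}{2}\alpha u^2$, then uses $\alpha\le1$ and the componentwise bounds $|u|\le 2|z|/\sqrt\alpha$, $|v|\le2|z|$ to get
\[
u^2+2|uv|\le\frac{4|z|^2}{\alpha}+\frac{8|z|^2}{\sqrt\alpha}=\frac{4(1+2\sqrt\alpha)}{\alpha}|z|^2,
\]
which already produces the $(1+2\sqrt\alpha)$ in the statement. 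There is no hidden cancellation between $-uv$ and the linear part of $(v+\alpha)(e^{-u}-1)$ to exploit; the sharper constant comes from not introducing that decomposition in the first place. As written, your argument leaves the constant gap unbridged, and the speculation about how to bridge it points in the wrong direction.

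Second, your numerical thresholds are off by a factor of two: the first entry in $\eps(\alpha)$ corresponds to $|z|<\sqrt\alpha/20$, hence $|u|<1/10$ and $|v|<1/10$ (not $|z|\le\sqrt\alpha/40$, $|u|\le1/20$). This matters because the universal constant absorbing $e^{|u|}$ is $e^{1/10}\le10/9$, and the coefficients $10/9$ and $5/9$ are exactly what produce the $9/20$ in the second entry of $\eps(\alpha)$. With your thresholds the bookkeeping would not reproduce those constants. Also, the parenthetical $|q_1(\alpha)|=1/\sqrt\alpha$ is false ($|q_1(\alpha)|=\sqrt{(1+\alpha)/\alpha}$); the bounds $|u|\le2|z|/\sqrt\alpha$, $|v|\le2|z|$ come from the individual components of $q(\alpha)$ in \eqref{uv(z)}, not from its norm.

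A smaller stylistic difference: you propose fixing a contraction factor $\lambda\in(\sqrt\alpha,1)$ and iterating $|z_{k+1}|\le\lambda|z_k|$. The paper instead only proves the non-uniform inequality $|G(z,\bar z,\alpha)|<|z|$ throughout the disk and concludes $|z_k|\to0$ by a continuity/compactness argument. Your version is cleaner and works — once $|z_0|<\eps_G(\alpha)$, the contraction rate $\sqrt\alpha+C(\alpha)|z_0|<1$ is uniform along the orbit since $|z_k|$ decreases — but to recover exactly the paper's $\eps(\alpha)$ you cannot fix $\lambda$ a priori; you need the threshold to be the supremum of admissible radii, which is what the strict inequality encodes.
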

\begin{proof}
Let us study the map in the form \eqref{Ricker_complex_z}. First note that \eqref{uv(z)} easily implies inequalities
\begin{equation}\label{u,v<z}
|u|\leq \frac{2}{\sqrt{\alpha}}|z| \text{ and } |v|\leq 2|z|.
\end{equation}
Assuming $|u|<1/10$ and $|v|<1/10$ and applying the inequalities $\left|e^{-u}-1\right|\leq e^{1/10}|u|\leq \frac{10}{9}|u|$ and $\left|e^{-u}-1+u\right|\leq e^{1/10}\frac{u^2}{2}\leq \frac{5}{9}u^2$  we obtain the following estimations:
\begin{equation*}
\begin{split}
 |g(z,\overline{z},\alpha)|&=\left|\left\langle p(\alpha),f_\alpha\left(zq(\alpha)+\overline{zq(\alpha)}\right)\right\rangle\right|\\
&=\left|\frac{\sqrt{4\alpha-1}+i}{2\sqrt{4\alpha-1}}\right|
\left|v(e^{-u}-1)+\alpha(e^{-u}-1+u)\right|\\
&\leq \sqrt{\frac{\alpha}{4\alpha-1}}\left(|v||e^{-u}-1|+\alpha|e^{-u}-1+u|\right)\leq 
\sqrt{\frac{\alpha}{4\alpha-1}}\left(|uv|e^{1/10}+\alpha\frac{e^{1/10}}{2}u^2\right)\\
&\leq\frac{5}{9}\sqrt{\frac{\alpha}{4\alpha-1}}\left(u^2+2|uv|)\right)
\leq \frac{5}{9}\cdot \frac{4(1+2\sqrt{\alpha})}{\sqrt{\alpha(4\alpha-1)}}|z|^2.
\end{split}
\end{equation*}
Now, since $|\mu(\alpha)|=\sqrt{\alpha}$, hence
$$|G(z,\overline{z},\alpha)|\leq \left(\sqrt{\alpha}+\frac{5}{9}\cdot \frac{4(1+2\sqrt{\alpha})}{\sqrt{\alpha(4\alpha-1)}}|z|\right)|z|<|z|$$
provided that $|z|\neq 0$ is so small that $|u|<\frac{1}{10}$ and $|v|<\frac{1}{10}$ and $\sqrt{\alpha}+\frac{5}{9}\cdot \frac{4(1+2\sqrt{\alpha})}{\sqrt{\alpha(4\alpha-1)}}|z|<1$.
Inequalities \eqref{u,v<z} imply that $|z|<\frac{\sqrt{\alpha}}{20}$ guarantees $|u|<\frac{1}{10}$ and $|v|<\frac{1}{10}$. Therefore
\begin{equation}\label{lineareps_z}
0<|z|<\eps_G(\alpha)=\min\left\{\frac{\sqrt{\alpha}}{20},
\frac{9(1-\sqrt{\alpha})\sqrt{\alpha(\alpha-1)}}{20(1+2\sqrt{\alpha})}\right\}
\end{equation}
implies $|G(z,\overline{z},\alpha)|<|z|$, which means that $|G^n(z_0,\overline{z_0},\alpha)|\to 0$ as $n\to \infty$ if $|z_0|<\eps_G(\alpha)$ is satisfied. We show this by way of contradiction. Assume that $|z_0|<\eps_G(\alpha)$, $z_n=G^n(z_0,\overline{z_0},\alpha)$ and $|z_0|>|z_1|>\dots >|z_n|>\dots\geq 0$ with $|z_n|\to c>0$ as $n\to \infty$. Since $G$ is continuous we have that $\max_{|z|=c}|G(z,\overline{z},\alpha)|<c$ and consequently $|z_k|<c$ also holds if  $k$ is large enough which is a contradiction.

From equation \eqref{z(u,v)} one obtains that if $|u|<\delta,\ |v|<\delta$, then
\begin{equation}\label{z<deltaalpha}
|z|<\delta\sqrt{\frac{\alpha(2+\alpha)}{4\alpha-1}}.
\end{equation}
From \eqref{z<deltaalpha} we infer that if $|u|<\eps(\alpha)$ and $|v|<\eps(\alpha)$ then $|z(u,v,\alpha)|<\eps_G(\alpha)$ which completes our proof.
\end{proof}

Note that $\varepsilon(\alpha)$ goes to $0$ as $\alpha$ goes to $1$. This means that when $\alpha$ is close to $1$, then the constructed region $\mathrm{K}(\underline{\alpha};\eps(\alpha))$ becomes very small. Thus it is impossible to show by interval arithmetic tools that every trajectory enters into it eventually. Nevertheless $\eps(\alpha)$ might be used in the case $\alpha\in[0.5,0.999]$. However, our following method is not only capable to give an attracting neighbourhood for all $\alpha \in [0.999,1]$, whose size is independent of the concrete value of the parameter, but it also serves a better estimation of the attracting region even if we assume only $\alpha\in[0.875,1]$. The main results of the section are the following two propositions.  Here, we only present the proof of Proposition \ref{thm22}. The whole argument can be repeated to get a universal attracting neighbourhood when only $[0.875,1]$ is assumed. The differences only appear in concrete values in the given estimations. 

\begin{proposition}\label{thm37}
For all fixed $\alpha \in [0.875,1]$, $\left\{(x,y)^T\in\R^2:|x-\alpha|<1/37, |y-\alpha|<1/37\right\}$ belongs to the basin of attraction of the fixed point  $\underline{\alpha}$ of $F_\alpha$.
\end{proposition}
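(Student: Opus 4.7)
The plan is to use the Neimark--Sacker normal form, following the same strategy outlined for Proposition \ref{thm22} but with the constants reworked for the larger parameter window $\alpha \in [0.875,1]$. Proposition \ref{thm_epsalpha} is inadequate here because its estimate is driven by $|\mu(\alpha)|=\sqrt{\alpha}$, which tends to $1$ as $\alpha \nearrow 1$, so the purely linear argument forces $\varepsilon(\alpha) \to 0$. The cubic resonant term in the normal form supplies the additional contraction that is invisible to the linearisation at the bifurcation.

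Starting from the complex form $z \mapsto \mu(\alpha) z + g(z,\bar z,\alpha)$ in \eqref{Ricker_complex_z}, I would apply a near-identity polynomial change of variables
$$z = w + h_2(w,\bar w,\alpha) + h_3(w,\bar w,\alpha),$$
choosing the coefficients of $h_2$ and $h_3$ so as to eliminate every non-resonant quadratic and cubic monomial. For $\alpha \in [0.875,1]$ the eigenvalue $\mu(\alpha)$ avoids the strong resonances $\mu^{k} = 1$ for $k \leq 4$, so the only surviving cubic term is $w^{2}\bar w$ and the conjugated map becomes
$$w \mapsto \mu(\alpha) w + c_1(\alpha)\, w^{2} \bar w + r(w,\bar w,\alpha),$$
with $r = O(|w|^{4})$ and $c_1(\alpha)$ computable in closed form from $\mu(\alpha)$ and the $g_{kl}(\alpha)$. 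A direct evaluation of $\mathrm{Re}\bigl(\overline{\mu(\alpha)}\, c_1(\alpha)\bigr)$ shows that the bifurcation is supercritical, so there exists a uniform $\kappa > 0$ with $-2\,\mathrm{Re}\bigl(\overline{\mu(\alpha)}\, c_1(\alpha)\bigr) \geq \kappa$ on $[0.875,1]$. Using $|\mu(\alpha)|^{2} = \alpha$, one iterate then satisfies
$$|w'|^{2} \leq \alpha\, |w|^{2} - \kappa\, |w|^{4} + M\,|w|^{5}$$
on a disc $|w| < \rho$, where $M$ is an explicit upper bound for the fifth-order contribution of $r$ together with the cross-terms produced by $h_2,h_3$. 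Picking $\rho$ so that $(1-\alpha) + M\rho \leq \kappa\,\rho^{2}/2$ for the worst $\alpha$ gives $|w'| < |w|$, and the contradiction argument used in the proof of Proposition \ref{thm_epsalpha} then yields $w_n \to 0$ on the disc.

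Finally, I would pull the attracting disc $|w|<\rho$ back to the $(u,v)$ square $|u|,|v|<1/37$. Inequality \eqref{z<deltaalpha} controls $|z|$ in terms of the size of $u,v$, and explicit bounds on $h_2,h_3$ on the target disc convert $|z|<\rho'$ into $|w|<\rho$. The chief obstacle throughout is not conceptual but quantitative: qualitative normal-form theory already guarantees the existence of some attracting neighbourhood, whereas here a specific numerical threshold such as $1/37$ has to be attained uniformly in $\alpha$. This forces one to bound every intermediate object --- the coefficients of $h_2,h_3$, their images on the target disc, the composition remainder produced when substituting the inverse transformation into the original map, and the fourth-order constant in $r$ --- with explicit, numerically tight constants, which is presumably where the appendix formulae and the interval-arithmetic machinery of the paper enter the picture.
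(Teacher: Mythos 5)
Your plan matches the paper's own route for this proposition: the authors give no separate proof but remark that the argument establishing Proposition~\ref{thm22} is simply repeated on $[0.875,1]$ with the interval-arithmetic bounds on the $g_{kl}$, $h_{kl}$, $c_1$ and the remainder terms recomputed, which is exactly the programme you lay out. One small slip in your sketch: the condition $(1-\alpha)+M\rho\le\kappa\rho^2/2$ has the sign on $1-\alpha$ and the powers of $\rho$ reversed and is in fact unsatisfiable for small $\rho$; what one actually needs from $|w'|^2\le\alpha|w|^2-\kappa|w|^4+M|w|^5$ is $M|w|^3-\kappa|w|^2<1-\alpha$ for $0<|w|<\rho$, which, since $\alpha\le1$, holds automatically once $M\rho\le\kappa$, so the correction does not affect the viability of the argument.
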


\begin{proposition}\label{thm22}
For all fixed $\alpha \in [0.999,1]$, $\left\{(x,y)^T\in\R^2:|x-\alpha|<1/22, |y-\alpha|<1/22\right\}$ belongs to the basin of attraction of the fixed point  $\underline{\alpha}$ of $F_\alpha$.
\end{proposition}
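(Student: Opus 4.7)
The plan is to perform a near-identity polynomial change of coordinates that brings the complex map \eqref{Ricker_complex_z} into its Neimark--Sacker resonant normal form, with quantitative control on the remainder. At the critical value one has $\mu(1) = e^{i\pi/3}$, hence $\mu(1)^k \neq 1$ for $k \in \{1,2,3,4\}$, so the strong resonance conditions are satisfied strictly; by continuity they persist on $[0.999,1]$. Consequently all denominators of the form $\mu^{k}-\mu$ and $\mu^{k}-\bar\mu$ (for the appropriate $k,l$) that appear in the construction of the normal form transformation stay uniformly bounded away from $0$ on this interval.

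Concretely, I would look for a change of variables
$$
z = w + \sum_{2 \leq k+l \leq 3} \frac{h_{kl}(\alpha)}{k!\,l!}\, w^{k} \bar w^{l},
$$
choosing the $h_{kl}(\alpha)$ to cancel every non-resonant monomial of degree $\leq 3$ in the image, leaving only the resonant cubic term $w^{2}\bar w$. The transformed map has the form
$$
w \mapsto \mu(\alpha)\, w + c(\alpha)\, w^{2}\bar w + R(w,\bar w,\alpha),
$$
with $R$ collecting every monomial of order $\geq 4$ and $c(\alpha)$ given by an explicit closed-form expression in $g_{20}(\alpha), g_{11}(\alpha), g_{02}(\alpha), g_{21}(\alpha)$ obtained from \eqref{app_g}. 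Squaring yields
$$
|w_{\mathrm{new}}|^{2} = \alpha\,|w|^{2} + 2\,\mathrm{Re}\!\bigl(\overline{\mu(\alpha)}\, c(\alpha)\bigr)\,|w|^{4} + \mathcal{E}(w,\bar w,\alpha),
$$
where $\mathcal{E}$ depends on $R$. After evaluating the first Lyapunov coefficient $\mathrm{Re}\!\bigl(\overline{\mu(\alpha)}\, c(\alpha)\bigr)$ at $\alpha = 1$ and verifying that it is strictly negative (supercritical case), one obtains a uniform negative upper bound on $[0.999,1]$. For $\alpha \leq 1$ this makes the right-hand side strictly less than $|w|^{2}$, as long as $|w|$ is small enough that the remainder $\mathcal{E}$ is dominated by the negative $|w|^{4}$ term.

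The central technical step is then to produce a rigorous explicit bound on $|R(w,\bar w,\alpha)|$ over a concrete disc $\{|w|\leq \rho_{0}\}$, uniformly in $\alpha \in [0.999,1]$, that is sharp enough to deliver the specific constant $1/22$. This requires Taylor expansions with quantitative remainders of the analytic function $f_\alpha$ from \eqref{Ricker_beta_F}, uniform bounds on the tail $\sum_{k+l \geq 4}\frac{g_{kl}(\alpha)}{k!\,l!}z^{k}\bar z^{l}$ obtained via Cauchy-type estimates on an analyticity disc, and explicit control of the inverse of the near-identity change $z(w,\bar w,\alpha)$ (which is invertible for small $|w|$, with an equally controllable tail). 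Chaining these estimates through the linear map \eqref{uv(z)}, one then checks that every $(u,v)^{T}$ with $|u|, |v| < 1/22$ corresponds to some $w$ with $|w| < \rho_{0}$, so that $|w_{\mathrm{new}}| < |w|$. A limit-point contradiction argument identical to the one used in the proof of Proposition \ref{thm_epsalpha} then forces $|w_{n}| \to 0$ and hence $F_{\alpha}^{n}(x,y) \to \underline{\alpha}$. The hardest part will be precisely this quantitative bookkeeping: the closed form of $c(\alpha)$ is already substantial (hence the appendix), and the remainder $R$ is a sum of products of near-identity coefficients with higher Taylor coefficients of $g$, so only careful, interval-aided algebraic estimates — not crude triangle-inequality bounds — will yield the desired radius $1/22$ across the full parameter interval.
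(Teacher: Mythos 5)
Your plan is essentially the paper's proof: the same Kuznetsov-style near-identity change of variables $h_\alpha$ of degree $\le 3$ killing non-resonant terms, the same reduction to $w \mapsto \mu(\alpha)w + c_1(\alpha)w^2\bar w + R_2$, the same observation that the first Lyapunov quantity (the paper uses $a(\alpha)=\mathrm{Re}\bigl(\tfrac{|\mu|}{\mu}c_1\bigr)$, which agrees with your $\mathrm{Re}(\bar\mu c)$ up to the positive factor $|\mu|$, and shows $-1 < a(\alpha)\le -\tfrac14$) is uniformly negative, the same chain of uniform bounds (on the $h_{kl}$, on $g$, on $h_\alpha^{-1}$, on $R_2$) established by interval arithmetic, the same contraction/fixed-point argument to invert $h_\alpha$ on a concrete disc, and the same back-substitution through $z(u,v,\alpha)$ to convert a disc $|w|<1/20$ into the square $|u|,|v|<1/22$, finished by the same limit-point contradiction. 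Your suggestion to use Cauchy estimates for the analytic tail is a valid alternative to the paper's direct Taylor-remainder bounds, but it is a cosmetic difference rather than a genuinely different route.
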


\begin{proof}
We follow the steps of finding the normal form of the Neimark--Sacker bifurcation, according to Kuznetsov \cite{Kuznetsov}. In our calculations and estimations we use symbolic calculations and built in symbolic interval arithmetic tools of Wolfram Mathematica v. 7 or 8.

According to Kuznetsov \cite{Kuznetsov}, we are aiming to transform system \eqref{Ricker_complex_z} to a system which takes the following form.
\begin{equation}\label{Ricker_complex_w_bev}
w\mapsto \mu(\alpha)w+c_1(\alpha)w^2\overline{w}+R_2(w,\overline{w},\alpha),
\end{equation}
where $c_1$ and $R_2$ are smooth, real functions such that for fixed $\alpha$,  $R_2(w,\overline{w},\alpha)=O(|w|^4)$. We are going to show that there exists $\eps_0>0$ such that for all $|w|<\eps_0$ and $\alpha \in [0.999,1]$, $$\left|\mu(\alpha)w+c_1(\alpha)w^2\overline{w}+R_2(w,\overline{w},\alpha)\right|<|w|$$
holds, which implies that $\mathrm{B}_{\eps_0}$ belongs to the basin of attraction of the fixed point $0$ of the discrete dynamical system generated by \eqref{Ricker_complex_w_bev}. From this, we shall be able to show that the fixed point $\underline{\alpha}$ of $F_\alpha$ attracts all points of $\mathrm{K}(\underline{\alpha};\frac{1}{22})$.

To be more precise, for a fixed $\alpha$, we are looking for a function $h_{\alpha}:\C\to\C$, which is invertible in a neighbourhood of $0\in\C$ and which is such that in the new coordinate $w=h_{\alpha}^{-1}(z)$, our map \eqref{Ricker_complex_z} takes the form 
\begin{equation}\label{Ricker_complex_w}
w\mapsto h_{\alpha}^{-1}(G(h_{\alpha}(w),\overline{h_{\alpha}(w)},\alpha))=\mu(\alpha)w+c_1(\alpha)w^2\overline{w}+R_2(w,\overline{w},\alpha),
\end{equation}
where $R_2(w,\overline{w},\alpha)=O(|w|^4)$ for fixed $\alpha$. According to \cite{Kuznetsov}, such a function can be found in the form
\begin{equation}\label{h(w)}
h_{\alpha}(w)=w+\frac{h_{20}(\alpha)}{2} w^2+h_{11}(\alpha) w \overline{w}+\frac{h_{02}(\alpha)}{2}\overline{w}^2+\frac{h_{30}(\alpha)}{6}w^3+\frac{h_{12}(\alpha)}{2} w \overline{w}^2+\frac{h_{03}(\alpha)}{6}\overline{w}^3.
\end{equation}

Clearly, $h_\alpha$ has an inverse in a small neighbourhood of $0\in\C$. A formula for  $h_\alpha^{-1}$ can be given in the form
$$h^{-1}_\alpha(z)=h_{inv,\alpha}(z)+R_3(z,\alpha),$$
where 
$$h_{inv,\alpha}(z)=z+\sum_{2\leq k+l \leq 4} a_{kl}(\alpha)z^k\overline{z}^l$$
and $R_3(z,\alpha)=O(|z|^5)$. The coefficients can be obtained by substituting $w=h^{-1}(z)$ into $z=h_\alpha(w)$ and equating the coefficients of the same type of terms up to the fourth order. The result for $h_{inv,\alpha}$ in terms of $h_{20}(\alpha),\dots,h_{03}(\alpha)$ is given in \eqref{app_hinv(z)Hij}. The coefficients $h_{20}(\alpha),\dots,h_{03}(\alpha)$ are determined such that
$$h_{\alpha}^{-1}(G(h_\alpha(w),\overline{h_\alpha(w)},\alpha))$$
has the form $\mu(\alpha)+c_1(\alpha)w^2\overline{w}$ plus at least fourth order terms in $w$, that is, the transformation kills all second and third order terms with one exception. This requires the condition
$$\left(\frac{\mu(1)}{|\mu(1)|}\right)^k\neq 1 \quad \text{for all}\ k\in\{1,2,3,4\}.$$
As $\mu(1)=\frac{1}{2}+i\frac{\sqrt{3}}{2}$, this is clearly satisfied. Formulae \eqref{app_h20}-\eqref{app_h03} contain the obtained results.

In which region is the transformation valid? We are going to show that $h_\alpha$ is injective on $\overline{\mathrm{B}_{1/3}} \subset \C$ and that $h^{-1}_\alpha$ is defined on $\overline{\mathrm{B}_{1/5}}$. Let us suppose that $z\in\C$ is fixed and $h_{20}(\alpha),\dots,h_{03}(\alpha)$ are given for a fixed $\alpha \in [0.999,1]$. Let 
$$H_{\alpha,z}:\C\ni w\mapsto w+z-h_\alpha(w)\in \C.$$
By this notation, $H_{\alpha,z}(w)=w$ holds if and only if $h_\alpha(w)=z$. Now, we have the following
\begin{equation*}
\begin{split}
\left|H_{\alpha,z}(w_1)-H_{\alpha,z}(w_2)\right|=&\left|w_1-h_\alpha(w_1)-w_2+h_\alpha(w_2)\right|\\
\leq &|w_1-w_2|\cdot\left(\left(\frac{|h_{20}(\alpha)|}{2}+|h_{11}(\alpha)| +\frac{|h_{02}(\alpha)|}{2}\right)(|w_1|+|w_2|)\right.+\\
&\left.\left(\frac{|h_{30}(\alpha)|}{6}+\frac{|h_{12}(\alpha)|}{2}+
\frac{|h_{03}(\alpha)|}{6}\right)\left(|w_1|^2+|w_1||w_2|+|w_2|^2\right)\right).
\end{split}
\end{equation*}
If $|h_{20}(\alpha)|/2+|h_{11}(\alpha)|+|h_{02}(\alpha)|/2<\delta_1,\ |h_{30}(\alpha)|/6 + |h_{12}(\alpha)|/2 + |h_{03}(\alpha)|/6<\delta_2,\ |w|\leq \delta_3$ and $|z|\leq \delta_4$ hold, then we have
$$\left|H_{\alpha,z}(w_1)-H_{\alpha,z}(w_2)\right|\leq |w_1-w_2|(2\delta_1\delta_3+3\delta_1\delta_3^2)$$
and
$$\left|H_{\alpha,z}(w)\right|\leq \delta_4+\delta_1\delta_3^2+\delta_2\delta_3^3.$$
By interval arithmetics we obtain that the first two inequalities are fulfilled if $\delta_1=0.76$ and $\delta_2=0.52$. Now, if we choose $\delta_3=\frac{1}{3}$ and $\delta_4=\frac{1}{5}$ we obtain that $H_{\alpha,z}:\overline{\mathrm{B}_{1/3}}\to \overline{\mathrm{B}_{1/3}}$ is a contraction. Hence for all fixed $z \in \overline{\mathrm{B}_{1/5}}$ there exists exactly one $w=w(z)\in\overline{\mathrm{B}_{1/3}}$ such that $H_{\alpha,z}(w(z))=w(z)$, that is $h_\alpha(w(z))=z$. This means that $h_\alpha^{-1}$ can be defined on $\overline{\mathrm{B}_{1/5}}$.

The obtained estimations on $h_\alpha$ are going to be used in the sequel. These were
\begin{equation*}
|h_{20}(\alpha)|/2+|h_{11}(\alpha)|+|h_{02}(\alpha)|/2<0.76,
\end{equation*}
\begin{equation*}
|h_{30}(\alpha)|/6 + |h_{12}(\alpha)|/2 + |h_{03}(\alpha)|/6<0.52
\end{equation*}
and in particular
\begin{equation}\label{h_up_low}
|w|-0.76|w|^2-0.52|w|^3<|h_{\alpha}(w)|<|w|+0.76|w|^2+0.52|w|^3
\end{equation}
for all $\alpha \in [0.999,1]$.

Let $H=\{(\alpha,w)\in\R\times\C:\ \alpha\in[0.999,1],\ w\neq 0 \text{ and } |w|<1/20\}$. In the sequel, we shall always assume that $(\alpha,w)\in H$. From this assumption and inequality \eqref{h_up_low} we readily get that  $|w|<1.05|h_\alpha(w)|$ from which we get in particular that $|z|=|h_{\alpha}(w)|<1/19$.

Our goal now is to give $\eps_0\in (0,1/20],$ independent of $\alpha$ such that for every $\alpha \in [0.999,1]$, if $0<|w|<\eps_0$, then $\left|h^{-1}_\alpha\left(G\left(h_\alpha(w),\overline{h_\alpha(w)},\alpha\right)\right)\right|<|w|$ holds which guarantees that $\mathrm{B}_{\eps_0}$ belongs to the basin of attraction of the fixed point  $0$ of the discrete dynamical system generated by \eqref{Ricker_complex_w}. For, we turn our attention to the estimation of function $R_2$ in \eqref{Ricker_complex_w}. 

First, we go back to \eqref{Ricker_complex_z}. Let us consider
$$g(z,\overline{z},\alpha)=\sum_{k+l=2,3}\frac{g_{kl}(\alpha)}{k!l!}z^k\overline{z}^l+ R_1(z,\overline{z},\alpha).$$
The explicit formulae for $g_{20}(\alpha),\dots,g_{03}(\alpha)$ can be found in equations \eqref{app_g20}--\eqref{app_g03}. By interval arithmetics, one may obtain that for all $\alpha\in [0.999,1]$,
\begin{equation}\label{g2nd}
\frac{|g_{20}(\alpha)|}{2}+|g_{11}(\alpha)|+\frac{|g_{02}(\alpha)|}{2}=\\
\frac{1-\alpha +\sqrt{\alpha  (2+\alpha )}}{\sqrt{\alpha  (-1+4 \alpha )}}<1.01,
\end{equation}

\begin{equation}\label{g3rd}
\frac{|g_{30}(\alpha)|}{6}+\frac{|g_{21}(\alpha)|}{2}+\frac{|g_{12}(\alpha)|}{2}
+\frac{|g_{03}(\alpha)|}{6}=
\sqrt{\frac{(6+\alpha )}{9\alpha(4 \alpha-1) }}+\sqrt{\frac{2+(\alpha-2) \alpha }{\alpha^2(4 \alpha-1) }}<1.09.
\end{equation}

We also have that $R_1(z,\overline{z},\alpha)=\sum_{k+l=
4}\frac{g_{kl}(\alpha)}{k!l!}z^k\overline{z}^l+\tilde{R_1}(z,\overline{z},\alpha)$, where $\tilde{R_1}(z,\overline{z},\alpha)=O(|z|^5)$ for fixed $\alpha$. For explicit formulae of the fourth order coefficients see equations \eqref{app_g40}-\eqref{app_g04} in the appendix. It is clear from equations \eqref{Ricker_beta_F}, \eqref{uv(z)} and \eqref{Ricker_complex_z} that 
$$\tilde{R_1}(z,\overline{z},\alpha)=\frac{\sqrt{4\alpha-1}-i}{2\sqrt{4\alpha-1}}
\left(v\sum_{k=4}^{\infty}\frac{(-u)^k}{k!}+
\alpha\sum_{k=5}^{\infty}\frac{(-u)^k}{k!}\right),$$
where $u$ and $v$ are defined by equation \eqref{uv(z)}. Using $0<|z|<1/19$ and \eqref{u,v<z} we have that $|u|<1/8,\ |v|<1/8$ and obtain
\begin{equation*}
\begin{split}
\left|\tilde{R_1}(z,\overline{z},\alpha)\right|&\leq
\sqrt{\frac{\alpha}{4\alpha-1}}\left(|v|\frac{e^{1/8}}{4!}|u|^4+
\alpha\frac{e^{1/8}}{5!}|u|^5\right)<\sqrt{\frac{\alpha}{4\alpha-1}}\frac{8}{7}\left(2|z|\frac{16}{24\alpha^2}+
\frac{32}{\alpha^{3/2}120}|z|\right)|z|^4\\
&\leq \sqrt{\frac{\alpha}{4\alpha-1}}\frac{8}{7}\left(\frac{4}{57\alpha^2}+
\frac{4}{285\alpha^2}\right)|z|^4=\frac{64}{665}\sqrt{\frac{1}{(4\alpha-1)\alpha^3}}|z|^4.
\end{split}
\end{equation*}
Now for all $(\alpha,w)\in H$ with $z=h_\alpha(w)$ we get that
\begin{equation*}
\begin{split}
\left|R_1(z,\overline{z},\alpha)\right| &\leq \sum_{k+l=
4}\left|\frac{g_{kl}(\alpha)}{k!l!}z^k\overline{z}^l\right|+\left|\tilde{R_1}(z,\overline{z},\alpha)\right|=\frac{6-3 \alpha +\sqrt{\alpha  (12+\alpha )}+4 \sqrt{3+\alpha ^2}}{12 \sqrt{\alpha ^3 (-1+4 \alpha )}}+\left|\tilde{R_1}(z,\overline{z},\alpha)\right|\\
&<\frac{4758-1995 \alpha +665 \sqrt{\alpha  (12+\alpha )}+2660 \sqrt{3+\alpha ^2}}{7980 \sqrt{\alpha ^3 (-1+4 \alpha )}}.
\end{split}
\end{equation*}
By interval arithmetics we obtain for all $\alpha\in[0.999,1]$ that
\begin{equation}\label{g4th}
\left|R_1(z,\overline{z},\alpha)\right|<0.76|z|^4.
\end{equation}

We also need a similar estimation on $\left|h_{\alpha}^{-1}(z)\right|$. Let us recall that $z=h_{\alpha}(w)$ and 
$$w=h_{\alpha}^{-1}(z)=h_{inv,\alpha}(z)+R_3(z,\alpha).$$
As $h_{inv,\alpha}(z)$ is a polynomial of $z$ and $\overline{z}$ of degree four (see formulae \eqref{app_hinv(z)Hij}--\eqref{app_h03}), we denote the coefficient corresponding to $z^k\overline{z}^l$ by $h_{inv}^{kl}(\alpha)$. Calculating these coefficients and using interval arithmetics, we obtain that for all $\alpha \in [0.999,1],$
\begin{equation}\label{hinv2-4}
\sum_{k+l=2}\left|h_{inv}^{kl}\right|<0.76,\quad \sum_{k+l=3}\left|h_{inv}^{kl}\right|<1.06 \quad \text{and} \quad \sum_{k+l=4}\left|h_{inv}^{kl}\right|<1.39.
\end{equation}
See formulae \eqref{app_hinv2} -- \eqref{app_hinv_04}. Let us recall that for all $(\alpha,w)\in H$, $|w|<1.05|h_\alpha(w)|$ holds. Now, for the fifth and higher order terms in $R_3$, first we give an estimation of type $|R_3(h_\alpha(w),\alpha)|<K_3 |w|^4$ and then we get that $|R_3(z,\alpha)|<K_3 1.05^4 |z|^4$, with $z=h_\alpha(w)$.

From the definition of $h_{inv,\alpha}$, it follows that $R_3(h_{\alpha}(w),\alpha)=w-h_{inv,\alpha}(h_{\alpha}(w))$
is a polynomial of $w$ and $\overline{w}$ and it has only fifth and higher order terms. Let $r_3^{kl}(\alpha)$ denote the coefficient of  $R_3(h_{\alpha}(w),\alpha)$ corresponding to $w^k\overline{w}^l$. We use a bit rougher estimation for $\left|\sum_{5\leq k+l}r_3^{k+l}(\alpha)w^k\overline{w}^l\right|$. Namely, first we give the  estimations 
\begin{equation}\label{hnm_up}
\begin{split}
&|h_{20}(\alpha)|<1.01;  \quad |h_{11}(\alpha)|<0.001;  \quad |h_{02}(\alpha)|<0.51\\
&|h_{30}(\alpha)|<0.89; \quad |h_{12}(\alpha)|<0.45; \quad |h_{03}(\alpha)|<0.89
\end{split}
\end{equation}
for all $\alpha\in[0.999,1]$. Now, in $R_3(h_{\alpha}(w),\alpha)$ we replace $w$ and $\overline{w}$ by $|w|$, $h_{nm}(\alpha)$ by the estimates given in \eqref{hnm_up} (for $2\leq n+m\leq 3,\ (n,m)\neq(2,1))$, and then we turn every $-$ sign into $+$ to get a real polynomial $\hat{R}_3(|w|)$, with nonnegative coefficients $\hat{r}_3^k$ (independent of $\alpha$) corresponding to $|w|^k$. If we use $0<|w|<1/20$, then we get that
\begin{equation}\label{R3_5}
\sum_{5\leq k+l\leq 12}|r_3^{kl}(\alpha)||w|^{k+l}<\sum_{5\leq k\leq 12}\hat{r}_3^k|w|^k<\sum_{5\leq k\leq 12}\hat{r}_3^k|w|^4\left(\frac{1}{20}\right)^{k-4}<1.02|w|^4.
\end{equation}
This implies that
\begin{equation}\label{R3_4...}
|R_3(z,\alpha)|<1.05^4\cdot 1.02|z|^4<1.24|z|^4
\end{equation}
for all $(\alpha,w)\in H$, where $z=h_\alpha(w)$. It is now clear that
\begin{equation}\label{hinvup}
h_{inv}(z)<|z|+0.76|z|^2+1.06|z|^3+2.63|z|^4
\end{equation}
holds for all $(\alpha,w)\in H$ with $z=h_\alpha(w)$.

Now, we are able to give an estimation on $R_2$ in \eqref{Ricker_complex_w}. First, according to our previous estimations, let us define the following real polynomials.
$$h^{max}(s)=s+0.76s^2+0.52s^3,$$
$$G^{max}(s)=s+1.01s^2+1.09s^3+0.76s^4,$$
$$h_{inv}^{max}(s)=s+0.76s^2+1.06s^3+2.63s^4$$
and
$$Q(s)=\sum_{k=1}^{48}q_{k}(s)=h_{inv}^{max}\circ G^{max} \circ h^{max}(s).$$
By now, it is obvious that for $(\alpha,w)\in H$, $\left|R_2(w,\overline{w},\alpha)\right|<\sum_{k=4}^{48}q_k(|w|)|w|^4\left(\frac{1}{20}\right)^{k-4}$ holds, which leads to $\left|R_2(w,\overline{w},\alpha)\right|<23.9|w|^4$. For our purposes this approach is too rough -- the obtained neighbourhood would be too small $(\approx \mathrm{K}(\underline{\alpha}; 1/80))$ and we could not prove that every trajectory enters it eventually. Hence we have to be as sharp as we can in our estimations to obtain as large neighbourhood as possible. So, instead of only estimating these functions separately, let us consider the composite function $h_{\alpha}^{-1}\circ G_\alpha \circ h_\alpha$, where $G_\alpha(z)$ denotes $G(z,\overline{z},\alpha)$. Now, we are only interested in the fourth and higher order terms. Since $h_\alpha$ is a known function and we also know functions $G_{\alpha}$ and  $h_{\alpha}^{-1}$ up to fourth order terms, hence we are able to compute the fourth order coefficients of $h_{\alpha}^{-1}\circ G_\alpha \circ h_\alpha$, denoted by $r_2^{kl}(\alpha)$, where $k+l=4$. By interval arithmetics we show that
\begin{equation}\label{R2pt1}
\sum_{k+l=4}|r_2^{kl}(\alpha)|<1.02.
\end{equation}
See equations \eqref{app_r2_40} -- \eqref{app_r2_04} for the formulae. Using this we infer that 
\begin{equation}\label{R2}
|R_2(w,\overline{w},\alpha)|<1.02|w|^4+
\sum_{k=5}^{48}q_k|w|^4\left(\frac{1}{20}\right)^{k-4}<4.6|w|^4.
\end{equation}
for all $(\alpha,w)\in H$.

Now, we turn our attention to $c_1(\alpha)$ in \eqref{Ricker_complex_w}. The formula for $c_1(\alpha)$ can be found in \eqref{app_c1}.

According to \cite{Kuznetsov} and using inequality \eqref{R2} we get the following
\begin{equation}\label{radialest}
\begin{split}
&\left|\mu(\alpha)w+c_1(\alpha)w^2\overline{w}+R_2(w,\overline{w},\alpha)\right|\leq
|w|\left|\mu(\alpha)+c_1(\alpha)|w|^2\right|+\left|R_2(w,\overline{w},\alpha)\right|\\
&=|w|\left| |\mu(\alpha)|+d(\alpha)|w|^2\right|+\left|R_2(w,\overline{w},\alpha)\right|<
|w|\left| \sqrt{\alpha}+d(\alpha)|w|^2\right|+4.6|w|^4,
\end{split}
\end{equation}
for all $(\alpha,w)\in H$, where $d(\alpha)=\frac{|\mu(\alpha)|}{\mu(\alpha)}c_1(\alpha)$. Let 
$$R_4(w,\alpha)=\left|\sqrt{\alpha}+d(\alpha)|w|^2\right|-(\sqrt{\alpha}+a(\alpha)|w|^2),$$
where $a(\alpha)$ denotes the real part of $d(\alpha)$.

In the following we are going to prove that $\left|R_4(w,\alpha)\right|< 0.1|w|^3$ holds for all $(\alpha,w)\in H$. First of all, the formula for function $a$ is the following
\begin{equation}
a(\alpha)=\frac{4+\alpha  \left(-10+\alpha +\alpha ^2\right)}
{4 \alpha ^{3/2} (-1+\alpha(4+\alpha ))}.
\end{equation}
It can be readily shown that
\begin{equation}\label{a_up_low}
-1< a(\alpha)\leq -\frac{1}{4}
\end{equation}
holds for all $\alpha\in[0.999,1]$. Using the definition of $d$ and $a$, the estimation above and the assumption $(\alpha,w)\in H$ we get the following.
\begin{equation*}
\begin{split}
&\left|\left|\sqrt{\alpha}+d(\alpha)|w|^2\right|-(\sqrt{\alpha}+a(\alpha)|w|^2)\right|=
\left|\sqrt{\alpha+2\sqrt{\alpha}a(\alpha)|w|^2+|d(\alpha)|^2|w|^4}-(\sqrt{\alpha}+a(\alpha)|w|^2)\right|\\
&=\left|\frac{(|d(\alpha)|^2-(a(\alpha))^2)|w|^4}
{\sqrt{\alpha+2\sqrt{\alpha}a(\alpha)|w|^2+|d(\alpha)|^2|w|^4}+\sqrt{\alpha}+a(\alpha)|w|^2}\right|\\
&\leq\frac{(|d(\alpha)|^2-(a(\alpha))^2)|w|^4}
{\sqrt{400\alpha|w|^2+2\sqrt{\alpha}a(\alpha)|w|^2}+20\sqrt{\alpha}|w|+a(\alpha)|w|}\leq
\frac{(|d(\alpha)|^2-(a(\alpha))^2)}{\sqrt{400\alpha-2}+20\alpha-1}|w|^3\\
&\leq\frac{(|d(\alpha)|^2-(a(\alpha))^2)}{19\sqrt{\alpha}+18\alpha}\cdot|w|^3\leq
\frac{(|d(\alpha)|^2-(a(\alpha))^2)}{37\alpha}\cdot|w|^3=
\frac{\left(\alpha^4+3\alpha^3-12\alpha^2+20\alpha-4\right)^2}{16\cdot 37\alpha^6(4\alpha-1)(\alpha^2+4\alpha-1)^2}\cdot|w|^3.
\end{split}
\end{equation*}
From this last formula it can be proven that $|R_4(w,\alpha)|<0.1|w|^3$. From inequalities  \eqref{radialest}, \eqref{a_up_low} and from the above estimate we obtain that for all $(\alpha,w)\in H$ the inequalities
\begin{equation}\label{righths<w}
\begin{split}
&\left|\mu(\alpha)w+c_1(\alpha)w^2\overline{w}+R_2(w,\overline{w},\alpha)\right|<
|w|\left(\sqrt{\alpha}+a(\alpha)|w|^2+R_4(w,\alpha)\right)+4.6|w|^4\\
&<\sqrt{\alpha}|w|-\frac{1}{4}|w|^3+4.7|w|^4<|w|\left(1-\frac{1}{4}|w|^2(1-4\cdot 4.7|w|)\right)<|w|
\end{split}
\end{equation}
hold provided that $|w|<\frac{1}{20}=\min\left\{\frac{1}{20},\frac{1}{4\cdot 4.7}\right\}$. This means that for all $\alpha \in [0.999,1]$ the set  $\mathrm{B}_{1/20}$ belongs to the basin of attraction of the discrete dynamical system generated by \eqref{Ricker_complex_w}.

Now, we only have to show that for any $(x,y)^T \in K(\underline{\alpha}; 1/22)$, after our transformations, $|w|<\eps_0=\frac{1}{20}$ holds. First, we need $\eps_G$ such that for $|z|<\eps_G$, $|w|=|h_\alpha^{-1}(z)|<\frac{1}{20}$ holds for all $\alpha \in [0.999,1]$. From \eqref{h_up_low} we obtain that $\eps_G=\frac{1}{21}$ is an appropriate choice. Now, if $|u|<\eps,\ |v|<\eps$, then from \eqref{z<deltaalpha} we get that
\begin{equation*}
|z(u,v,\alpha)|\leq \eps\sqrt{\frac{\alpha(2+\alpha)}{4\alpha-1}} .
\end{equation*}
Thus for
\begin{equation}\label{eps}
\eps<\min_{\alpha\in[0.999,1]}\sqrt{\frac{4\alpha-1}{\alpha(2+\alpha)}} \cdot\frac{1}{21}
\end{equation}
we get that if $|u|<\eps$ and $|v|<\eps$ then $|z(u,v,\eps)|<\frac{1}{21}$. It is easily shown that $\eps=\frac{1}{22}$ fulfils inequality \eqref{eps} which proves our proposition.
\end{proof}

\begin{proposition}\label{stable}
Fixed point $\underline{\alpha}$ of map $F_\alpha$ is locally asymptotically stable if and only if $\alpha \in (0,1]$.
\end{proposition}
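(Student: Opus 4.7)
The plan is to split on the value of $\alpha$ and combine two ingredients: linearization for $\alpha \neq 1$, and Proposition~\ref{thm22} at the critical parameter $\alpha = 1$.

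First I would compute the Jacobian of $F_\alpha$ at $\underline{\alpha}$; a direct calculation identifies it with the matrix $A(\alpha)$ of \eqref{Ricker_beta_F}, whose characteristic polynomial is $\lambda^2 - \lambda + \alpha$. For $0 < \alpha \leq 1/4$ the two roots are real and lie in $(0,1)$; for $1/4 < \alpha < 1$ they form a complex conjugate pair of modulus $\sqrt{\alpha} < 1$. In both regimes the spectral radius of $A(\alpha)$ is strictly less than one, so the standard linearization (Hartman--Grobman type) theorem for discrete maps gives local asymptotic stability of $\underline{\alpha}$ for every $\alpha \in (0,1)$.

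For the borderline value $\alpha = 1$ the eigenvalues sit on the unit circle at $e^{\pm i\pi/3}$, so linear theory is inconclusive and the argument has to be nonlinear. Here I would invoke Proposition~\ref{thm22} specialized to $\alpha = 1$: it delivers the explicit neighborhood $\{(x,y) : |x-1|<1/22,\ |y-1|<1/22\}$ inside the basin of attraction of $(1,1)^T$, which is local attractivity. To additionally obtain Lyapunov stability at $\alpha = 1$, I would reuse the pointwise estimate \eqref{righths<w} from that same proof: in the normal-form coordinate $w$ it shows $|w_{n+1}| < |w_n|$ whenever $0 < |w_n| < 1/20$, so the $w$-iterates starting in any sub-ball of $\mathrm{B}_{1/20}$ remain in that same sub-ball. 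Since the coordinate transformation from $(x,y)$ to $w$ is the composition of a linear isomorphism with the locally analytic $h_\alpha^{-1}$, and hence a homeomorphism near $\underline{\alpha}$, this monotone decay transports back to Lyapunov stability in the original variables.

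Finally, for $\alpha > 1$ the eigenvalues of $A(\alpha)$ have modulus $\sqrt{\alpha} > 1$, so the linearization has an eigenvalue strictly outside the closed unit disk, and the standard instability theorem for discrete dynamical systems yields that $\underline{\alpha}$ is not Lyapunov stable, hence not locally asymptotically stable. The only delicate point in the proof is therefore the critical case $\alpha = 1$, where local attractivity comes from Proposition~\ref{thm22} and Lyapunov stability from the monotone $w$-decay inside its proof; every other range of $\alpha$ is handled by textbook linearization.
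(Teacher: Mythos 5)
Your argument is correct and matches the paper's own proof essentially verbatim: the paper also appeals to linearization for $\alpha \in (0,1)$ (stability) and $\alpha > 1$ (instability), and for the critical case $\alpha = 1$ it also invokes the normal-form transformation \eqref{Ricker_complex_w} together with inequality \eqref{righths<w} to conclude local asymptotic stability. The only difference is that you spell out the details (eigenvalue computation, transport of the $w$-decay back to $(x,y)$ coordinates through the local diffeomorphism) which the paper leaves implicit.
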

\begin{proof}
By linearisation one readily gets that $\underline{\alpha}$ is locally asymptotically stable if $\alpha\in(0,1)$, and unstable if $\alpha\in(1,\infty)$. Transforming map $F_\alpha$ to the form \eqref{Ricker_complex_w} and using inequality \eqref{righths<w} yields that $\underline{1}$ is a locally asymptotically stable fixed point of $F_1$.
\end{proof}

\section{Graph representations}
\label{graph}

\subsection*{Covers and graph representations}
Different directed graphs can be associated with a given map. The graphs reflect the behaviour of the map up to a given resolution. The vertices of these graphs are sets and the edges correspond to transitions between them. We can derive properties of our dynamical system through the study of the graphs. These techniques appeared in many articles, in both rigorous and non-rigorous computations for maps by Hohmann, Dellnitz, Junge, Rumpf \cite{Hohmann_Subdivision}, \cite{Hohmann_Invariant}, Galias \cite{Galias_Ikeda}, Luzzatto and Pilarczyk \cite{Luzzatto_Finite}, and computations for the time evolution of a continuous system with a given timestep by Wilczak  \cite{Wilczak_Kuznetsov}. We introduce the general setting and two applications in particular. One to enclose the non-wandering points and the other one to estimate the basin of attraction.

Both methods (Algorithms \ref{graph-alg-nonw} and \ref{graph-alg-basin}) combined with local estimations of the type of Section \ref{neighbourhood} at the critical points, can be theoretically applied to prove different dynamical properties. On the one hand these algorithms are included for possible future references, on the other hand certain elements of these algorithms proved to be useful for the map $F_\alpha$ in Section \ref{application}. In particular, the correctness of Algorithm \ref{graph-alg-nonw} is crucial in Section \ref{application}.

\begin{definition}
  \label{graph-def-cover}
  $\S$ is called a \emph{cover} of $\domain \subseteq \R^n$ if it is a collection of subsets of $\R^n$ such that $\cup_{s \in \S}s \supseteq \domain$. We denote their union $\cup_{s \in \S} s$ by $|\S|$ in the following.
  We define the \emph{diameter} or \emph{outer resolution} of the cover $\S$ by
  \begin{equation*}
    \resR^+(\S) = \diam(\S) = \sup_{s \in \S} \diam(s).
  \end{equation*}
  with
  \begin{equation*}
  \diam(s) = \sup_{x, y \in s} \norm{x - y}. 
  \end{equation*}
  A cover $\S_2$ is said to be \emph{finer} than the cover $\S_1$ if
  \begin{equation*} 
    \left( \forall s_1 \in \S_1 \right) \; \left( \exists \{ s_{2,i}, i \in \I \} \subseteq \S_2 \right) \; \textrm{ such that } \bigcup_{i \in \I} s_{2,i} =s_1.
  \end{equation*}
  We denote this relation by $\S_2 \preccurlyeq \S_1$.
  The \emph{inner resolution} of a cover $\S$ is the following:
  \begin{equation*}
    \resR^{-}(\S) = \sup \{ r \geq 0 : \forall x \in \domain, \exists s \in \S : \Ball(x; r) \subseteq s\}.
  \end{equation*}
  A cover $\S$ is \emph{essential} if $\S \setminus s$ is not a cover anymore for any $s \in \S$.
  The cover $\P$ is called a \emph{partition} if it consists of closed sets such that $|\P| = \domain$ and $\forall p_1, p_2 \in \P : p_1 \cap p_2 \subseteq \bd(p_1) \cup \bd(p_2)$, where $\bd(p)$ is the boundary of the set $p$. Consequently, for any partition $\P$ the inner resolution $\resR^{-}(\P)$ is zero.
\end{definition}

In the following we will always work with essential and finite partitions, as a consequence the supremum in the definition of the diameter $\resR^+$ of the partition becomes a maximum.

\begin{definition}
   A \emph{directed graph} $\G = \G(\V,\E)$ is a pair of sets representing the vertices $\V$ and the edges $\E$, that is: $\E \subseteq \V \times \V$, and $(u,v) \in \E$ means that $\G$ has a directed edge going from $u$ to $v$. We say that $v_1 \to v_2 \to \ldots \to v_k$ is a \emph{directed path} if $(v_i, v_{i+1}) \in \E$ for all $i = 1, \ldots, k - 1$. If $v_k = v_1$, then it is a \emph{directed cycle}.

  A directed graph $\G$ is called \emph{strongly connected} if for any $u, v \in \V$, $v \neq u$ there is a directed path from $u$ to $v$ and from $v$ to $u$ as well. The \emph{strongly connected components} (SCC) of a directed graph $\G$ are its maximal strongly connected subgraphs. It is easy to see that $u$ and $v$ are in the same SCC if and only if there is a directed cycle going through both $u$ and $v$. Every directed graph $\G$, can be decomposed into the union of strongly connected components and directed paths between them. If we contract each SCC to a new vertex, we obtain a directed acyclic graph, that is called the \emph{condensation} of $\G$.

  We say that the directed paths $p_1, p_2$ are from the same \emph{family of directed paths}, if they visit the same vertices in $\V$ (multiple visits are possible). If the set of the visited vertices is $V \subseteq \V$, then we denote the family by $\Upsilon_{path}(V)$, and we say that $V$ is the \emph{vertex set} of the family. In a similar manner we can define the \emph{family of directed cycles}, and denote it by $\Upsilon_{cycle}(V)$, and say that $V$ is the vertex set of the family.
\end{definition}

\begin{definition}
  \label{graph-def-repr}
  Let $f : \domain_f \subseteq \R^n \to \R^n$, $\domain \subseteq \domain_f$, and $\S$ be a cover of $\domain$.
  We say that the directed graph $\G(\V,\E)$ is a \emph{graph representation of $f$ on $\domain$ with respect to $\S$}, if there is a $\iota : \V \to \S$ bijection such that the following implication is true for all $u, v \in \V$:
  \begin{equation*}
    f( \iota(u) \cap \domain ) \cap \iota(v) \cap \domain \neq \emptyset \Rightarrow (u,v) \in \E,
  \end{equation*}
  and we denote it by $\G \propto (f, \domain, \S)$.
\end{definition}

Having a graph representation $\G$ of $f$ on $\domain$ with respect to $\S$, we take the liberty to handle the elements of the cover as vertices and vica versa, omitting the usage of $\iota$.
It is important to emphasise that in general $(u,v) \in \E$ does not imply that $f(u \cap \domain) \cap v \cap \domain \neq \emptyset$. If we have $(u,v) \in \E \Leftrightarrow f(u \cap \domain) \cap v \cap \domain \neq \emptyset$, then we call $\G$ an \emph{exact} graph representation.

\subsection*{Enclosure of the non-wandering points}
Consider the continuous map
\begin{equation}
  \label{graph-eq-discrete}
  f : \domain_f \subseteq \R^n \to \R^n
\end{equation}
Let $f^{-1}(x) = \{ y \in \domain_f : f(y) = x \}$, for $x \in \R^n$.
\begin{definition}
  \label{graph-def-nonw}
  The point $q \in \domain_f$ is called a \emph{fixed point} if $f(q) = q$. $q \in \domain_f$ is a \emph{periodic point with minimal period $m$} if $f^m(q) = q$ and for all $0 < k < m : f^k(q) \neq q$; $q \in \domain_f$ is \emph{eventually periodic} if it is not periodic, but there is a $k_0$ such that $f^{k_0}(q)$ is periodic. 
  The point $q \in \domain_f$ is a \emph{non-wandering point} of $f$ if for every neighbourhood $U$ of $q$ and for any $M \geq 0$, there exists an integer $m \geq M$ such that $f^m(U \cap \domain_f) \cap U \cap \domain_f \neq \emptyset$.

  Let $K \subseteq \domain_f$ be a compact set. We denote the set of periodic points of $f$ in $K$ by $\Per{f}{K}$ and the set of non-wandering points of $f$ in $K$ by $\NonW{f}{K}$.
\end{definition}
Instead of directly studying the map (\ref{graph-eq-discrete}), we will analyse different graph representations of $f$.
Let $K \subseteq \domain_f$ be a compact set satisfying $f(K) \subseteq K$, $\P$ a partition of $K$, and $\G \propto (f, K, \P)$. We shall use the algorithm from \cite{Galias_Ikeda} to enclose the non-wandering points in $K$.
\begin{algorithm}[H]
\caption{Enclosure of non-wandering points} \label{graph-alg-nonw}
\begin{algorithmic}[1]
\Function{EnclosureNonW}{$f, K, \delta_0$}
  \Comment $f$ is the function, $K$ is the starting region.
  \State $k \leftarrow 0$
  \State $\V_0 \leftarrow$ Partition($K, \delta_0$)
  \Comment $\V_0$ is a partition of $K$, $\diam(\V_0) \leq \delta_0$.
  \Loop  
    \State $\E_k \leftarrow$ Transitions($\V_k, f$)
    \Comment The possible transitions (extra edges may occur).
    \State $\G_k \leftarrow$ GRAPH$(\V_k, \E_k$)
    \Comment $\G_k \propto (f, |\V_k|, \V_k)$
    \ForAll {$v \in \V_k$}
      \If {$v$ is \textbf{not} in a directed cycle}\label{graph-line-nonw-check}
	  \State \textbf{remove} $v$ from $\G_k$
      \EndIf
    \EndFor
    \If {$\G_k$ is empty}
      \State \Return $\emptyset$
      \Comment There is no non-wandering point in $K \Rightarrow \NonW{f}{K} = \emptyset$.
    \Else
      \State $\delta_{k+1} \leftarrow$ $\delta_k / 2$
      \State $\V_{k+1} \leftarrow$ Partition($|\V_k|, \delta_{k+1}$)
      \Comment $\V_{k+1}$ is a partition of $|\V_k|$, $\diam(\V_{k+1}) \leq \delta_{k+1}$
      \State $k \leftarrow k + 1$
    \EndIf
  \EndLoop
\EndFunction
\end{algorithmic}
\end{algorithm}
However, this algorithm for enclosing non-wandering points appeared without a full proof. We will give it here, not just for the sake of completeness, but because some steps are non-trivial. We need to take special care when a non-wandering point is on the boundary of a partition element.

For any $x \in K$, define
\begin{equation*}
  \tilde{\P}_x := \{ u \in \P : x \in u \}.
\end{equation*}
Since we are working with finite covers, both $\P$ and $\tilde{\P}_x$ are finite.

\begin{lemma}
  \label{graph-lemma-nonw1}
  For every $q \in K$, there is an $\eta_q > 0$ such that for any $u \in \P$, if $u \cap \Ball(q; \eta_q) \neq \emptyset$, then $q \in u$.
\end{lemma}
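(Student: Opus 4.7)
The plan is to exploit that $\P$ is a \emph{finite} partition consisting of \emph{closed} sets, both properties being in force by the standing assumption of this section. Split the partition into the two parts $\P_q^{+} := \{u \in \P : q \in u\}$ and $\P_q^{-} := \{u \in \P : q \notin u\}$. The aim is to choose $\eta_q$ small enough that \emph{every} element of $\P_q^{-}$ stays at positive distance from $\Ball(q; \eta_q)$; then the only partition elements meeting this ball must belong to $\P_q^{+}$, which is precisely the conclusion.

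For each $u \in \P_q^{-}$ the set $u$ is closed in $\R^n$ and $q \notin u$, so a standard compactness argument (intersect $u$ with a large closed ball around $q$ and use continuity of $\norm{\cdot - q}$) yields $\mathrm{dist}(q, u) > 0$. Since $\P_q^{-}$ is finite, the minimum of these distances is a positive number, and I set
\[
\eta_q := \tfrac{1}{2} \min_{u \in \P_q^{-}} \mathrm{dist}(q, u),
\]
with the convention $\eta_q := 1$ in the degenerate case $\P_q^{-} = \emptyset$. Now if $u \in \P$ satisfies $u \cap \Ball(q; \eta_q) \neq \emptyset$, pick $x \in u$ with $\norm{x - q} < \eta_q$; then $\mathrm{dist}(q, u) \leq \norm{x - q} < \eta_q$, which by the very choice of $\eta_q$ forces $u \notin \P_q^{-}$, that is, $q \in u$.

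I do not foresee any real obstacle — finiteness and closedness do all the work. The only subtle point is not to confuse ``$q$ lies on the boundary of $u$'' with ``$q \notin u$''; since partition elements are closed by definition, a boundary point of $u$ still belongs to $u$, so boundary cases are automatically absorbed into $\P_q^{+}$. This is in fact the reason the lemma is needed in what follows: it allows one to treat non-wandering points sitting on the boundary of a partition element without losing them between neighbouring elements of $\P$.
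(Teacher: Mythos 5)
Your argument is correct and is essentially the same as the paper's: both split off the finitely many partition elements not containing $q$, use closedness (the paper notes compactness, but closedness suffices via the standard intersect-with-a-ball argument you give) to get a positive distance from $q$ to each, and take $\eta_q$ below the minimum of these finitely many distances, treating the case where every element contains $q$ separately. Your final remark about boundary points being absorbed because partition elements are closed matches the paper's motivation for the lemma.
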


\begin{proof}
  Since we are working with finite partitions, this is very easy to see. If $\tilde{\P}_q = \P$ then any positive number satisfies the condition. Otherwise define
  \begin{equation*}
    \eta := \min_{u \in \P \setminus \tilde{\P}_q} d(q, u).
  \end{equation*}
  This is positive, since $\P \setminus \tilde{\P}_q$ is a finite set and for $u \in \P \setminus \tilde{\P}_q$, $u$ is compact and $q \notin u$. Now we can take any number for $\eta_q$ from $(0, \eta)$.
\end{proof}

\begin{lemma}
  \label{graph-lemma-nonw2}
  For every $q \in \NonW{f}{K}$, there are $u, v \in \P$ such that
  \begin{enumerate}
   \item $q \in u \cap v$
   \item for every $\varepsilon > 0$, there are $x = x(\varepsilon), N = N(x(\varepsilon), \varepsilon) \in \N$ such that $\lim_{\varepsilon \to 0}N(x(\varepsilon), \varepsilon) = \infty$, $x \in \Ball(q; \varepsilon), f^N(x) \in \Ball(q; \varepsilon)$, $x \in u$ and $f^N(x) \in v$.
  \end{enumerate}
\end{lemma}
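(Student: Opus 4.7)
The plan is to combine the defining property of a non-wandering point with the pigeonhole principle applied to the finite set $\tilde{\P}_q$ of partition elements that contain $q$. The non-wandering hypothesis produces, for every shrinking neighbourhood of $q$, a point that returns after arbitrarily long time; by recording which partition elements the departure and return points lie in, one obtains a sequence of pairs in the finite set $\tilde{\P}_q \times \tilde{\P}_q$, and pigeonhole then extracts a single pair $(u,v)$ that works uniformly in $\varepsilon$.

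Concretely, for each $k \in \N$ I would apply the non-wandering property to the neighbourhood $U_k = \Ball(q;1/k)$ with $M = k$, obtaining some $m_k \geq k$ and a point $y_k \in U_k \cap K$ with $f^{m_k}(y_k) \in U_k \cap K$. Next I would fix $k_0$ so that $1/k_0 < \eta_q$, where $\eta_q$ comes from Lemma \ref{graph-lemma-nonw1}; then for all $k \geq k_0$ every partition element meeting $\Ball(q;\eta_q)$ contains $q$, so there exist $u_k, v_k \in \tilde{\P}_q$ with $y_k \in u_k$ and $f^{m_k}(y_k) \in v_k$. Since $\tilde{\P}_q$ is finite, pigeonhole gives a fixed pair $(u,v) \in \tilde{\P}_q \times \tilde{\P}_q$ and an infinite subsequence $k_{j_1} < k_{j_2} < \cdots$ along which $(u_{k_{j_i}}, v_{k_{j_i}}) = (u,v)$; this pair furnishes conclusion (1).

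To obtain the witnesses needed for conclusion (2), given $\varepsilon > 0$ I would set $i(\varepsilon) := \min\{i : 1/k_{j_i} < \varepsilon\}$ and define $x(\varepsilon) := y_{k_{j_{i(\varepsilon)}}}$ and $N(x(\varepsilon),\varepsilon) := m_{k_{j_{i(\varepsilon)}}}$. By construction $x(\varepsilon) \in u \cap \Ball(q;\varepsilon)$ and $f^N(x(\varepsilon)) \in v \cap \Ball(q;\varepsilon)$, while $N \geq k_{j_{i(\varepsilon)}}$; as $\varepsilon \to 0$ the index $i(\varepsilon)$ tends to infinity, so $k_{j_{i(\varepsilon)}} \to \infty$ and hence $N \to \infty$ as required.

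The main technical subtlety, and the reason Lemma \ref{graph-lemma-nonw1} was stated separately, is that a non-wandering point may lie on the common boundary of several partition elements, so the element that happens to contain a nearby point $y_k$ need not a priori contain $q$ itself. Without this control the pigeonhole extraction would still give a constant pair, but it would not automatically satisfy $q \in u \cap v$. Lemma \ref{graph-lemma-nonw1} supplies the uniform radius $\eta_q$ inside which every partition element in use actually contains $q$, and this is precisely what lets the entire pigeonhole argument be carried out inside $\tilde{\P}_q \times \tilde{\P}_q$ rather than in the larger finite set $\P \times \P$.
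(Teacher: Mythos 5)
Your proposal is correct and takes essentially the same approach as the paper: both use the non-wandering condition on a shrinking sequence of balls $\Ball(q;1/k)$ to produce points returning near $q$ after arbitrarily many steps, invoke Lemma~\ref{graph-lemma-nonw1} (via $\varepsilon < \eta_q$) to ensure the relevant partition elements lie in $\tilde{\P}_q$, and then extract a fixed pair $(u,v)$ by finiteness. The only cosmetic difference is that you apply pigeonhole once to pairs in $\tilde{\P}_q\times\tilde{\P}_q$, whereas the paper applies it twice sequentially (first to fix $u$, then $v$), which is an equivalent argument.
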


\begin{proof}
  Consider a decreasing sequence of positive numbers $\{ \varepsilon_k \}^\infty_{k = 0}$ with $\lim_{k \to \infty}\varepsilon_k = 0$ and $\varepsilon_0 < \eta_q$. Since $q$ is non-wandering,
  \begin{equation*}
    \exists N_k \geq k : f^{N_k}( \Ball(q; \varepsilon_k) \cap \domain_f ) \cap \Ball(q; \varepsilon_k) \cap \domain_f \neq \emptyset.
  \end{equation*}
  Therefore
  \begin{equation*}
    \exists x_k \in \Ball(q; \varepsilon_k) : f^{N_k}( x_k ) \in \Ball(q; \varepsilon_k).
  \end{equation*}
  Since $\tilde{\P}_q$ is finite, we can pick $u \in \tilde{\P}_q$ such that it contains infinitely many $x_k$-s. We may assume, by switching to an appropriate subsequence and reindexing, that $\forall k : x_k \in u$. It is now possible to choose -- because of finiteness again -- an element of the partition $v \in \tilde{\P}_q$ such that it contains infinitely many of $f^{ N_k }( x_k )$. Switching again to the subsequence, the required conditions are now satisfied.
\end{proof}

\begin{remark}
  If there exists $u \in \P$ such that $q \in \int(u)$ then it follows from the definition, that $u$ and $v = u$ is a good choice. By $\int(u)$ we mean the interior of the set $u$.
\end{remark}

\begin{lemma}
  \label{graph-lemma-nonw3}
  For every $q \in \NonW{f}{K} \setminus \Per{f}{K}$ there is an element $u \in \tilde{\P}_q$ and a family of directed cycles $\Upsilon_{cycle}(V)$ in $\G$ such that $u \in V$, and the family encloses infinitely many trajectories in $K$ of the form $\left( x_k, f(x_k), f^2(x_k), \ldots, f^{N_k} (x_k) \right)$ with
  \begin{equation*}
    \lim_{k \to \infty} N_k = \infty \; \textrm{ and } \lim_{k \to \infty} x_k = \lim_{k \to \infty} f^{N_k} (x_k) = q.
  \end{equation*}
\end{lemma}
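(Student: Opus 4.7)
The plan is to lift the near-return trajectories supplied by Lemma~\ref{graph-lemma-nonw2} to directed walks in $\G$ and then use the finiteness of $\V$ (via pigeonhole) together with the hypothesis $N_k \to \infty$ to close them into directed cycles.

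First, invoke Lemma~\ref{graph-lemma-nonw2} to obtain $u, v \in \tilde{\P}_q$ and orbit segments $(x_k, f(x_k), \ldots, f^{N_k}(x_k))$ with $x_k \in u$, $f^{N_k}(x_k) \in v$, $N_k \to \infty$, and $x_k, f^{N_k}(x_k) \to q$. For each $k$ and each $0 \leq i \leq N_k$, pick some $w_i^{(k)} \in \P$ containing $f^i(x_k)$, setting $w_0^{(k)} := u$ and $w_{N_k}^{(k)} := v$. Definition~\ref{graph-def-repr} forces every consecutive pair to be an edge of $\G$, because $f^{i+1}(x_k) \in f(w_i^{(k)} \cap K) \cap w_{i+1}^{(k)} \cap K$, so each orbit segment lifts to a directed walk $w_0^{(k)} \to \cdots \to w_{N_k}^{(k)}$ in $\G$.

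Second, consider the visited vertex set $V_k := \{w_0^{(k)}, \ldots, w_{N_k}^{(k)}\} \subseteq \V$. Because $\V$ is finite, there are only finitely many such subsets, so after extracting a subsequence (not relabelled) we may assume $V_k = V$ for a fixed $V \subseteq \V$ with $u, v \in V$. Along this subsequence the walks live entirely in the induced subgraph $\G|_V$, start at $u$, end at $v$, and have lengths tending to infinity.

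Finally, I produce a directed cycle in $\G|_V$ through $u$ that encloses the orbit segments. If $u = v$ (which always holds when $q \in \int(u)$, by the remark following Lemma~\ref{graph-lemma-nonw2}) each walk is already closed and hence is itself a directed cycle in $\Upsilon_{cycle}(V)$; the family then encloses the orbit segments trivially. If $u \neq v$, I reapply Lemma~\ref{graph-lemma-nonw2} at $q$: since the output pair must lie in the finite set $\tilde{\P}_q \times \tilde{\P}_q$, a further pigeonhole argument (possibly after enlarging $V$ to include the union of the two extracted vertex sets) produces near-return walks from $v$ back to $u$ lying in $\G|_V$, and concatenating a forward walk $u \to v$ with a return walk $v \to u$ yields the desired directed cycle through $u$ with vertex set $V$. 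I expect the main obstacle to be precisely this $u \neq v$ case — which arises when $q$ sits on the common boundary of several partition elements — because closing the walk into a cycle requires arranging, through repeated pigeonholing on subsets of $\V$ and on $\tilde{\P}_q \times \tilde{\P}_q$, that a single $V$ simultaneously supports both forward and return walks through $u$ and is realised by infinitely many of the original orbit segments; the preparatory Lemma~\ref{graph-lemma-nonw1}, which certifies that only finitely many partition elements are visible near $q$, is exactly what keeps these pigeonhole arguments finite.
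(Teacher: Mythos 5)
The lifting of orbit segments to walks in $\G$ and the pigeonhole over the finite family of visited vertex sets are both sound and match the paper's strategy, and your treatment of the case $u=v$ is correct. The genuine gap is in the case $u \neq v$, which you yourself flag as "the main obstacle": your plan to "reapply Lemma~\ref{graph-lemma-nonw2} at $q$" and pigeonhole over $\tilde{\P}_q \times \tilde{\P}_q$ to obtain return walks from $v$ back to $u$ has no mechanism to force the roles of the two output elements to reverse. Lemma~\ref{graph-lemma-nonw2} only guarantees \emph{some} pair $(u,v) \in \tilde{\P}_q \times \tilde{\P}_q$; nothing prevents every near-return orbit segment from starting strictly inside $u$ and ending strictly inside $v$, in which case no amount of re-application or pigeonholing produces a walk $v \to \cdots \to u$, and your construction stalls.

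The idea you are missing is the one the paper uses to close the walk: pass to a convergent subsequence of the penultimate points $f^{N_k - 1}(x_k)$ (using compactness of $K$) to obtain a point $q'$ with $f(q') = q$, and pick partition elements $v' \in \tilde{\P}_{q'}$ and $u' \in \tilde{\P}_{f(q)}$ containing infinitely many of $f^{N_k-1}(x_k)$ and $f(x_k)$ respectively. Because $q \in u \cap v$ while $q' \in v'$ with $f(q') = q$, Definition~\ref{graph-def-repr} forces \emph{both} edges $v' \to v$ and $v' \to u$ to be present in $\G$; similarly, since $q \in u \cap v$ and $f(q) \in u'$, both $u \to u'$ and $v \to u'$ are edges. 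Concatenating the forward walk $u \to u' \to \cdots \to v'$, the detour $v' \to v \to u'$, the forward walk again $u' \to \cdots \to v'$, and finally $v' \to u$ yields a directed cycle whose vertex set $V$ contains $u$ and equals that of the family of paths selected by pigeonhole. Without the compactness step introducing $q'$ and the resulting cross-edges out of $v'$ and into $u'$, the cycle cannot be closed, so your proposal is incomplete as written.
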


\begin{proof}
  If there is a $u \in \P$ such that $q \in \int(u)$, then since $q \in \NonW{f}{K}$, there are infinitely many such trajectories, and each one of them is enclosed by a directed cycle that passes through $u$. Since there is only finite number of families of directed cycles,  therefore we can pick one family $\Upsilon_{cycle}(V)$ that encloses infinitely many trajectories, and $u \in V$.

  If such $u$ cannot be found, then we will do the following: for a series of positive numbers $\lim_{k \to \infty} \varepsilon_k = 0$, with the use of Lemma \ref{graph-lemma-nonw2}, we obtain the sets $u, v \in \tilde{P}_q$, the points $x_k \in u, f^{N_k}(x_k) \in v$ with $N_k \to \infty$, and that $\lim_{k \to \infty} x_k = \lim_{k \to \infty} f^{N_k} (x_k) = q$.

  Since $\{ f^{N_k - 1}(x_k) \} \subseteq K$, and $K$ is compact, we may assume that this sequence converges to a point $q' \in K$, that is $f^{N_k - 1}(x_k) \to q'$. From the continuity of $f$ it follows that
  \begin{equation*}
    f(q') = \lim_{k \to \infty} f^{N_k} (x_k) = q.
  \end{equation*}
  Since $\tilde{\P}_{q'}$ is finite, and because of Lemma \ref{graph-lemma-nonw1}, infinitely many of the points $f^{N_k - 1}(x_k)$ are inside one of its elements, without loss of generality we may assume, that
  \begin{equation*}
    \forall k \in \N : f^{N_k - 1}(x_k) \in v' \in \tilde{\P}_{q'}.
  \end{equation*}
  With similar argument we may assume that
  \begin{equation*}
    \forall k \in \N : f(x_k) \in u' \in \tilde{\P}_{f(q)}.
  \end{equation*}
  We are thus considering directed paths in the graph of the following form:
  \begin{equation*}
    u \to u' \to \ldots \to v' \to v.
  \end{equation*}
  Since there is only finite number of families of directed paths in $\G$ and infinitely many trajectories of the desired property are enclosed by them, there must be at least one such family $\Upsilon_{path}(V)$ that encloses infinitely many trajectories itself. The directed paths $v' \to v \to u'$ and $v' \to u \to u'$ are present in $\G$, since $f(q') = q$. The situation is depicted in Figure \ref{graph-fig-nonw}.
  \begin{figure}[!ht]
    \centering
    \includegraphics[scale=0.8]{./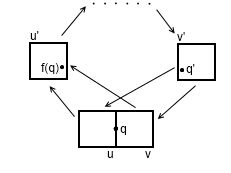}
    \caption{\label{graph-fig-nonw}The directed paths close to $q$}
  \end{figure}

  We can conclude our argument that the set $V$ is a vertex set for a family of directed cycles as well, since
  \begin{equation*}
    u \to u' \to \ldots^{\textrm{vertices from } V} \ldots \to v' \to v \to u' \to \ldots^{\textrm{vertices from } V} \ldots \to v' \to u,
  \end{equation*}
  is a directed cycle. Thus $\Upsilon_{cycle}(V)$ is well defined and encloses infinitely many trajectories of the desired type.
\end{proof}
Now we are ready to prove the correctness of Algorithm \ref{graph-alg-nonw}.

\begin{theorem}
  \label{graph-thm-nonw}
  If $\NonW{f}{K} \neq \emptyset$, then Algorithm \ref{graph-alg-nonw} will never stop and $\NonW{f}{K} \subseteq \V_k$ is satisfied for every $k$.
\end{theorem}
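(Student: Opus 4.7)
The plan is to prove by induction on $k$ that $\NonW{f}{K}\subseteq|\V_k|$, i.e., every non-wandering point in $K$ lies in some surviving vertex at step $k$. Since $\NonW{f}{K}\neq\emptyset$ then forces $|\V_k|\neq\emptyset$, the algorithm never enters its empty-graph return branch and therefore never halts.

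The base case $k=0$ is essentially Lemma \ref{graph-lemma-nonw3}: $\V_0$ is a partition of $K$, so every $q\in\NonW{f}{K}$ already sits in some element of $\V_0$, and one must check that some such element survives pruning by lying on a directed cycle of $\G_0$. I would split into two subcases. For periodic $q$ of minimal period $m$, the iterates $q,f(q),\ldots,f^{m-1}(q)$ lie in partition elements $u_0,u_1,\ldots,u_{m-1}$, and Definition \ref{graph-def-repr} forces the edges $u_i\to u_{(i+1)\bmod m}$ to be in $\G_0$, producing a directed cycle through $u_0\ni q$. For non-periodic non-wandering $q$, Lemma \ref{graph-lemma-nonw3} directly delivers a family of cycles containing a vertex $u\ni q$.

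For the inductive step, assume $\NonW{f}{K}\subseteq|\V_{k-1}|$. Since $\V_k$ is a partition of $|\V_{k-1}|$ of diameter at most $\delta_k$, every $q\in\NonW{f}{K}$ still lies in some $u_q\in\V_k$ before pruning. I would repeat the same two-case analysis with $\G_k$ in place of $\G_0$. The periodic subcase goes through verbatim once one notes that $\NonW{f}{K}$ is forward-invariant under $f$---an easy consequence of the definition and the continuity of $f$---so the whole orbit of $q$ lies in $|\V_{k-1}|$ by the inductive hypothesis, each iterate falls into some element of $\V_k$, and the cyclic chain of edges exists in $\G_k$. For the non-periodic subcase one re-runs the argument of Lemma \ref{graph-lemma-nonw3} with the triple $(|\V_{k-1}|,\V_k,\G_k)$ replacing $(K,\P,\G)$: one extracts $x_n\to q$, $f^{N_n}(x_n)\to q$ with $N_n\to\infty$, passes to a subsequence so that $f^{N_n-1}(x_n)\to q'$ with $f(q')=q$, and chains the graph-theoretic path $u\to u'\to\cdots\to v'$ with the extra edges $v'\to v$ and $v'\to u$ (supplied by $f(q')=q\in u\cap v$) into a cycle through $u\ni q$.

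The main obstacle is precisely this last adaptation. The proof of Lemma \ref{graph-lemma-nonw3} uses not only compactness but also forward-invariance of $K$ under $f$ to guarantee that the approximating trajectory $x_n,f(x_n),\ldots,f^{N_n}(x_n)$ and the limit point $q'$ stay in the ambient set, so that the graph-representation condition genuinely produces the required edges in $\G$. After pruning, the set $|\V_{k-1}|$ is compact but may fail to be forward-invariant: orbits can escape through the ``holes'' left by removed vertices. To overcome this, I would restrict attention to the closed forward-invariant set $\NonW{f}{K}$ itself, which by the inductive hypothesis is contained in $|\V_{k-1}|$, and use the non-wandering property of $q$ together with the fact that $\omega$-limit sets of nearby points are contained in $\NonW{f}{K}$ to supply approximating sequences whose full forward orbits remain in $\NonW{f}{K}\subseteq|\V_{k-1}|$. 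Carrying out this containment rigorously so that the resulting trajectory produces edges in $\G_k$ rather than in some auxiliary graph is the technical heart of the proof.
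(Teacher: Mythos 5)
Your high-level plan (induction on $k$, splitting non-wandering points into periodic and non-periodic, invoking Lemma~\ref{graph-lemma-nonw3} in the non-periodic case) matches the paper's approach, and you are right that there is a subtlety in the inductive step: after pruning, $|\V_{k-1}|$ is compact but generally \emph{not} forward-invariant under $f$, so one cannot simply re-invoke Lemma~\ref{graph-lemma-nonw3} with $|\V_{k-1}|$ in place of $K$.

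However, the fix you propose---restricting attention to the forward-invariant set $\NonW{f}{K}$ and using $\omega$-limit sets---does not close the gap. The trajectories $\left(x_n, f(x_n), \ldots, f^{N_n}(x_n)\right)$ that Lemma~\ref{graph-lemma-nonw2} produces to witness $q$'s non-wandering property are \emph{not} contained in $\NonW{f}{K}$: the $x_n$ are merely points in a shrinking ball around $q$, and their finite orbit segments are typically wandering. It is also a classical fact that a point can be non-wandering for $f$ but fail to be non-wandering for $f$ restricted to $\NonW{f}{K}$, so ``restricting attention to $\NonW{f}{K}$'' changes the property you are trying to use. Your observation that $\omega$-limit sets land in $\NonW{f}{K}$ is true but irrelevant here, since you need \emph{finite} orbit segments with both endpoints near $q$, not asymptotic orbits.

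The paper resolves the forward-invariance issue more cheaply, by re-threading the \emph{same} trajectories rather than producing new ones. At the base step, Lemma~\ref{graph-lemma-nonw3} yields a family of directed cycles $\Upsilon_{cycle}(V)$ whose vertex set $V$ encloses infinitely many finite trajectories $\left(x_n,\ldots,f^{N_n}(x_n)\right)$ with $x_n\to q$, $f^{N_n}(x_n)\to q$, $N_n\to\infty$, as well as the accumulation point $q'$ of $f^{N_n-1}(x_n)$ (note $q'\in |V|$ since $|V|$ is closed and these points all lie in $|V|$). Because the vertices of $V$ lie on a directed cycle, none is removed, so the entire compact set $|V|$---and hence every point of every one of these finite trajectories, together with $q$ and $q'$---survives into $|\V_1|$. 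On the refined partition $\V_1$ of $|\V_1|$, the \emph{same} trajectories determine directed paths in $\G_1$, the \emph{same} limit point $q'$ lies in $|\V_1|$, the edges $v'\to v$ and $v'\to u$ still exist because $f(q')=q$ and both points are in $|\V_1|$, and the finiteness argument again extracts one family of cycles enclosing infinitely many of them. No claim that $f(|\V_1|)\subseteq|\V_1|$ is required; one only needs that the particular finite trajectories already identified remain inside the surviving set, which is guaranteed by the survival of their enclosing cycle. Iterating this gives $q\in|\V_k|$ for every $k$, which is what the theorem asserts.
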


\begin{proof}
  Assume that $q \in \NonW{f}{K}$. If $q$ is a periodic point with period $N + 1$, then its orbit\\
  $\left\{q, f(q), \ldots, f^N(q)\right\}$ is enclosed in a directed cycle and none of these vertices is removed in the first step. Therefore $q$ is a periodic point of $f$ restricted to $|\V_1|$ as well. Repeating the same argument gives that $q$ is always enclosed.

  If $q \in \NonW{f}{K}$, but it is not periodic, then we obtain a family of directed cycles $\Upsilon_{cycle}(V)$ from Lemma \ref{graph-lemma-nonw3}, that encloses infinitely many trajectories of the type mentioned before. The vertices in $V$ are not removed, since they are in a directed cycle, thus the enclosures for all of these trajectories are preserved. Therefore $q$ is a non-wandering point of $f$ restricted to $|\V_1|$ as well. Since $|\V_k|$ is compact, we can repeat the same argument and obtain families of directed cycles $\Upsilon_{cycle}(V_k)$, that enclose infinitely many of these trajectories, that ensure that $q$ stays a non-wandering point when we restrict $f$ to $|\V_k|$ and $q \in |\V_k|$ is satisfied.
\end{proof}

\begin{remark}
  The theorem does not imply that at one step each vertex containing $q$ is kept.
\end{remark}

\begin{remark}
  We decide whether a vertex is in a directed cycle by decomposing the graph into strongly connected components. The vertices that form a component by themselves and have no self edges are the ones that are not in directed cycles. To find this decomposition, we will use the algorithm of Tarjan \cite{Tarjan_SCC}, that runs in linear time.
\end{remark}

\subsection*{Inner enclosure of the basin of attraction}
Consider the continuous map
\begin{equation}
  \label{graph-eq-discrete-2}
  f : \domain_f \subseteq \R^n \to \R^n.
\end{equation}
\begin{definition}
 A set $\O \subseteq \domain_f$ is called \emph{invariant} if $f(\O) = \O$. An invariant set $\O$ is called an \emph{attracting set} if there exists an open neighbourhood $\U \subseteq \domain_f$ of $\O$ such that
  \begin{equation}
    \label{graph-eq-trapping}
    \left( \forall \textrm{ open neighbourhood } V \supseteq \O \right) \; \left( \exists \, L(V) \geq 0 \right) \; \textrm{ such that } \forall k \geq L(V) : f^k(\U) \subseteq V
  \end{equation}
  This neighbourhood $\U$ is called a \emph{fundamental neighbourhood} of $\O$. The \emph{basin of attraction} of $\O$ is $\cup_{k \in \N_0} f^{-k}(\U)$. 
\end{definition}
Assume now that $K \subseteq \domain_f$ is a compact set such that $f(K) \subseteq K$. When analysing the forward orbits starting from $K$ we can work with $f|_K$. Assume that there is an attracting set $\O \subseteq K$ for $f|_K$ and a neighbourhood $U$ such that $\O \subseteq U \subseteq K$ and $U$ is inside the basin of attraction of $\O$. We want to find a set $B$ in the basin of attraction of $\O$ so that $U \subset B$. We will use the algorithm from \cite{Galias_Ikeda}:

\begin{algorithm}[H]
\caption{Inner enclosure of the basin of attraction} \label{graph-alg-basin}
\begin{algorithmic}[1]
\Function{Basin\_of\_Attraction}{$f, K, \delta_0; U$}
  \Comment $U$ is the initial attracting neighbourhood.
  \State $k \leftarrow 0$
  \State $W \leftarrow \emptyset$
  \Comment We collect the vertices in the basin of attraction into $W$.
  \State $\V_0 \leftarrow$ Partition($K, \delta_0$)
  \Comment $\V_0$ is a partition of $K$, $\diam(\V_0) \leq \delta_0$.
  \Loop  
    \State $\E_k \leftarrow$ Transitions($\V_k \cup W, f$)
    \Comment The possible transitions (extra edges may occur).
    \State $\G_k \leftarrow$ GRAPH$(\V_k \cup W, \E_k$)
    \Comment $\G_k \propto (f, |\V_k \cup W|, \V_k \cup W)$
    \Repeat
      \State ready $\leftarrow$ TRUE
      \ForAll {$v \in \V_k$}
	\If {$v \subseteq U \cup |W|$ \textbf{or} $f(v) \subseteq U \cup |W|$}\label{graph-line-basin-check}
	  \State \textbf{move} $v$ from $\V_k$ to $W$
	  \Comment $v$ is contained in the basin of attraction of $\O$
	  \State ready $\leftarrow$ FALSE
	\EndIf
      \EndFor
    \Until {ready}
    \Comment The remaining vertices are not attracted at this resolution
\algstore{boabreak}
\end{algorithmic}
\end{algorithm}
\begin{algorithm}[H]
\ContinuedFloat
\caption{Inner enclosure of the basin of attraction (continued)}
\begin{algorithmic}[1]
\algrestore{boabreak}
    \If {STOP($k, \V_k, W, \delta_k$)}
    \Comment Some stopping condition
      \State \Return $W$
    \EndIf
      \State $\delta_{k+1} \leftarrow$ $\delta_k / 2$
      \State $\V_{k+1} \leftarrow$ Partition($|\V_k|, \delta_{k+1}$)
      \Comment $\V_{k+1}$ is a partition of $|\V_k|$, $\diam(\V_{k+1}) \leq \delta_{k+1}$
    \State $k \leftarrow k + 1$
  \EndLoop
\EndFunction
\end{algorithmic}
\end{algorithm}

In Algorithm \ref{graph-alg-basin}, we move those partition elements into $W$, that lie inside or are mapped into the initial attracting neighbourhood, or the other elements in $W$. Then we refine our remaining partition and continue our procedure with a new one, that has a diameter  half as big as before. Since at the beginning $W$ was empty, it will only contain sets that are contained in the basin of attraction of $\O$. Thus after each cycle, $|W|$ is an inner enclosure of the basin of attraction of $\O$. We stop our iteration when a stopping condition is satisfied, for example $\delta_k < \Delta$, where $\Delta$ is a small positive number given in advance.

\section{The completion of the proof of Theorem \ref{maintheorem}}
\label{application}
Consider now a parameter value $\alpha$ for $F_\alpha$ such that $0.5 \leq \alpha \leq 1$. In order to prove that the fixed point $\underline{\alpha}$ is globally attracting, we need the following observation: given any starting point $(x_0, y_0)$, the accumulation points of the sequence $\left( F^k_\alpha(x_0, y_0) \right)_{k = 0}^\infty$ are non-wandering points of $F_\alpha$. We want to show that the only non-wandering point of $F_\alpha$ in $\R^2_+$ is the fixed point. We know from Lemma \ref{ricker-lemma-trapping} that it is enough to look for points in $S_i^{(\alpha)}$, $i \in \N_0$. Thus our goal is to prove that
\begin{enumerate}
  \item $S_i^{(\alpha)}$ is entirely in the basin of attraction of $\underline{\alpha}$,
  \item or equivalently, $S_i^{(\alpha)}$ contains exactly one non-wandering point, and that is $\underline{\alpha}$.
\end{enumerate}
Our strategy is to divide the parameter range $[0.5, 1] = [0.5, 0.875] \cup [0.875, 0.999] \cup [0.999, 1]$ into small subintervals $[\alpha]$. The diameter of these subintervals will vary between $10^{-3}, 10^{-4}$ and $10^{-5}$ in practice. For one small parameter interval $[\alpha]$ we shall follow these steps:
\begin{enumerate}
  \item Let $i_0 \geq 1$ be the smallest integer such that $|h_{i_0}^{([\alpha])} - h_{{i_0}+1}^{([\alpha])}| + |g_{i_0}^{([\alpha])} - g_{{i_0}+1}^{([\alpha])}| < 10^{-9}$.
  \item The function \texttt{ConstructRegion$([\alpha])$} returns a rigorous enclosure $[S]$ such that
  $$\bigcup_{\alpha \in [\alpha]} S_{i_0}^{(\alpha)} \subseteq [S].$$
  \item Using Propositions \ref{thm_epsalpha}, \ref{thm37} and \ref{thm22}, the function \texttt{FindAttractionDomain$([\alpha])$} returns an $\varepsilon_0 > 0$ such that $\mathrm{K}(\alpha; \varepsilon_0)$ is contained in the basin of attraction of $\underline{\alpha}$ for every $\alpha \in [\alpha]$.
  \item Enclose rigorously $\cup_{\alpha \in [\alpha]} \NonW{F_\alpha}{[S]} \setminus \{ \underline{\alpha} \})$ by removing parts of $[S]$ that do not contain non-wandering points of $F_\alpha$ or are in the basin of attraction of the fixed point $\underline{\alpha}$ for every $\alpha \in [\alpha]$. We do this by simultaneously checking the criteria from line $\ref{graph-line-nonw-check}$ of Algorithm \ref{graph-alg-nonw} and line $\ref{graph-line-basin-check}$ of Algorithm \ref{graph-alg-basin}. If we obtain an empty enclosure at some step, then we have proved that the fixed point in the given parameter region is globally attracting.
\end{enumerate}
We sum it in the following algorithm:
\begin{algorithm}[H]
\caption{Proving the global stability of $\underline{\alpha}$ for the Ricker-map} \label{ricker-alg-main}
\begin{algorithmic}[1]
\Procedure{Ricker}{$[\alpha], \delta$}
  \State $[S] \leftarrow$ \texttt{ConstructRegion($[\alpha]$)}
  \Comment from (\ref{ricker-eq-trapping})
  \State $\varepsilon_0 \leftarrow$ \texttt{FindAttractionDomain($[\alpha]$)}
  \Comment from Propositions \ref{thm_epsalpha}, \ref{thm37} and \ref{thm22}
  \State $[U] \leftarrow \mathrm{K}([\underline{\alpha}]; \varepsilon_0 - (\alpha^+ - \alpha^-))$
  \State $\V \leftarrow$ Partition($[S], \delta$)
  \Comment $\V$ is a partition of $[S]$, $\diam(\V) \leq \delta$
  \Repeat
    \State $\E \leftarrow$ Transitions($\V$, $F_{[\alpha]}$)
    \Comment The possible transitions (extra edges may occur).
    \State $\G \leftarrow$ GRAPH($\V, \E$)
    \Comment $\G \propto (F_{[\alpha]}, |\V|, \V)$
    \State $T \leftarrow \{ v : v$ is in a directed cycle $\}$
    \Comment with the use of Tarjan's algorithm
    \ForAll {$v \in \V$}
      \If{$v \notin T$ \textbf{or} $v \subseteq [U]$ \textbf{or} $F_{[\alpha]}(v) \subseteq [U]$}\label{ricker-line-main-check}
	\State \textbf{remove} $v$ from $\G$
      \EndIf
    \EndFor
    \State $\delta \leftarrow \delta / 2$
    \State $\V \leftarrow$ Partition($|\V|, \delta$)
  \Until{$|\V| = \emptyset$}
\EndProcedure
\end{algorithmic}
\end{algorithm}
We know that \texttt{FindAttractionDomain$[\alpha]$} returns an $\varepsilon_0 > 0$ such that for every $\alpha \in [\alpha] = [\alpha^-, \alpha^+]$, the set $\textrm{K}(\underline{\alpha}; \varepsilon_0)$ is in the basin of attraction of  $\underline{\alpha}$. Assume that $\alpha^+ - \alpha^- < \varepsilon_0$ and let
\begin{equation*}
  \varepsilon = \varepsilon_0 - (\alpha^+ - \alpha^-).
\end{equation*}
Now $\varepsilon > 0$ and the set $\textrm{K}([\underline{\alpha}]; \varepsilon)$ is in the basin of attraction of $\underline{\alpha}$ for every $\alpha \in [\alpha]$. Observe that, using subintervals with $\alpha^+ - \alpha^- \leq 10^{-3}$ and the $\varepsilon_0$ obtained from Propositions \ref{thm_epsalpha}, \ref{thm37} and \ref{thm22}, $\alpha^+ - \alpha^- < \varepsilon_0$ is satisfied.

After each step in the main cycle in Algorithm \ref{ricker-alg-main}, $|\V|$ is a rigorous enclosure of all the non-wandering points of $F_\alpha$ in $[S] \setminus \{\underline{\alpha}\}$ for every $\alpha \in [\alpha]$. This is easy to see since vertices are removed for two possible reasons which are both checked in line $\ref{ricker-line-main-check}$ of Algorithm \ref{ricker-alg-main}. First if $v \notin T$, then $v$ does not contain non-wandering points for any $F_\alpha$, $\alpha \in [\alpha]$ as we have seen in the proof of the correctness of Algorithm \ref{graph-alg-nonw}. Second if $v \subseteq [U]$ or $F_{[\alpha]}(v) \subseteq [U]$ that is $v$ is inside or mapped into the small attracting neighbourhood of every fixed point. Note that if a vertex is inside the basin of attraction of a fixed point $\underline{\alpha}$, then it cannot contain any other non-wandering point of $F_\alpha$, not even on the boundary. This is a similar criterion to  what we have used in line $\ref{graph-line-basin-check}$ 
of Algorithm \ref{graph-alg-basin}. The difference is that now we remove these vertices, consequently we do not have to collect them into a list. If the procedure ends in finite time, then we have established, that there are no other non-wandering points in $[S]$, thus the fixed point is globally attracting for all $\alpha \in [\alpha]$. We state this as
\begin{proposition}
  \label{ricker-thm-main}
  If Algorithm \ref{ricker-alg-main} ends in finite time with input parameters $([\alpha], 10^{-1})$, that is, after finite number of steps, $|\V| = \emptyset$ is satisfied, then $\underline{\alpha}$ is a globally attracting fixed point of the two dimensional Ricker-map $F_\alpha$ for every $\alpha \in [\alpha]$.
\end{proposition}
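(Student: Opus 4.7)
The plan is to prove the contrapositive: assuming Algorithm \ref{ricker-alg-main} terminates with $|\V|=\emptyset$, I derive global attraction for every $\alpha\in[\alpha]$. Fix $\alpha\in[\alpha]$ and $(x,y)\in\R^2_+$. By Lemma \ref{ricker-lemma-trapping} the forward orbit of $(x,y)$ eventually enters the forward-invariant square $S_{i_0}^{(\alpha)}\subseteq[S]$, so it suffices to show that every orbit in $S_{i_0}^{(\alpha)}$ converges to $\underline{\alpha}$. Since the orbit lies in a compact set, its $\omega$-limit set is a nonempty compact subset of $S_{i_0}^{(\alpha)}$ consisting entirely of non-wandering points of $F_\alpha$. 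Hence it suffices to prove $\NonW{F_\alpha}{[S]}=\{\underline{\alpha}\}$.

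Suppose for contradiction that $q\in\NonW{F_\alpha}{[S]}\setminus\{\underline{\alpha}\}$. First I show that neither $q$ nor $F_\alpha(q)$ lies in $[U]$. The choice $\varepsilon=\varepsilon_0-(\alpha^+-\alpha^-)$ and the translation structure of the maximum-norm ball give $[U]\subseteq\mathrm{K}(\underline{\alpha};\varepsilon_0)$ for every $\alpha\in[\alpha]$, and Propositions \ref{thm_epsalpha}, \ref{thm37} and \ref{thm22} place $\mathrm{K}(\underline{\alpha};\varepsilon_0)$ inside the basin of attraction of $\underline{\alpha}$, which is locally asymptotically stable by Proposition \ref{stable}. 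If $q\in[U]$ then $q$ lies in the basin, so some small neighbourhood of $q$ is eventually mapped entirely into a neighbourhood of $\underline{\alpha}$ disjoint from itself, contradicting the non-wandering property; and if $F_\alpha(q)\in[U]$ then $q$ still lies in the basin (basins are preserved under preimages), contradiction again.

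I now show by induction on the iteration counter $k$ of Algorithm \ref{ricker-alg-main} that the entire forward orbit of $q$ stays in $|\V_k|$, which contradicts $|\V|=\emptyset$ at termination. The base $k=0$ is immediate since $|\V_0|=[S]$. For the inductive step, observe that every point of the forward orbit of $q$ is itself a non-wandering point different from $\underline{\alpha}$ (the non-wandering set is forward-invariant, and a non-wandering point whose image is $\underline{\alpha}$ would itself be attracted to $\underline{\alpha}$ and hence equal to it). Let $v_k\in\V_k$ contain $q$. The criteria $v_k\subseteq[U]$ and $F_{[\alpha]}(v_k)\subseteq[U]$ of line \ref{ricker-line-main-check} both fail, for they would force $q\in[U]$ or $F_\alpha(q)\in F_{[\alpha]}(v_k)\subseteq[U]$. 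It remains to check $v_k\in T$. Since $F_\alpha(u)\subseteq F_{[\alpha]}(u)$ for every $u$, every edge of the ideal graph $\G_k^\ast\propto(F_\alpha,S_{i_0}^{(\alpha)},\{v\cap S_{i_0}^{(\alpha)}:v\in\V_k\})$ lifts to an edge of $\G_k$. Applying Lemma \ref{graph-lemma-nonw3} on the compact forward-invariant set $S_{i_0}^{(\alpha)}$ (or using the orbit itself in the periodic case) produces a cycle of $\G_k^\ast$ through $v_k\cap S_{i_0}^{(\alpha)}$, and lifting it gives the required cycle of $\G_k$ through $v_k$.

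The main obstacle is reconciling the interval-valued dynamics $F_{[\alpha]}$ used inside the algorithm with the family $\{F_\alpha\}_{\alpha\in[\alpha]}$ of pointwise dynamics one actually cares about. Two monotonicity observations save the day: $[U]$ is contained in the basin of each individual $F_\alpha$, so the basin-removal criterion never deletes a vertex hosting an $F_\alpha$-non-wandering point distinct from $\underline{\alpha}$; and $\G_k$ contains every cycle of each $F_\alpha$-graph (enlarging the edge set cannot destroy cycles), so the cycle-removal criterion also never deletes such a vertex. Combined with the a priori inclusion $\NonW{F_\alpha}{\R^2_+}\subseteq S_{i_0}^{(\alpha)}\subseteq[S]$ supplied by Lemma \ref{ricker-lemma-trapping}, the soundness of Algorithm \ref{graph-alg-nonw} carries over and yields the proposition.
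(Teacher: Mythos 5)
Your overall strategy coincides with the paper's: reduce to showing $\NonW{F_\alpha}{[S]}=\{\underline{\alpha}\}$ (via the standard $\omega$-limit-set-consists-of-non-wandering-points observation and Lemma~\ref{ricker-lemma-trapping}), then argue that the algorithm's surviving region $|\V_k|$ always encloses every non-wandering point $q\neq\underline{\alpha}$, so termination with $|\V|=\emptyset$ forces there to be none. You spell out a few things the paper leaves implicit and which are genuinely worth saying: that $q,F_\alpha(q)\notin[U]$ because a non-wandering point distinct from $\underline{\alpha}$ cannot lie in the basin, and that enlarging the edge set (passing from each $F_\alpha$ to the interval map $F_{[\alpha]}$) can only create cycles, never destroy them, so no vertex hosting such a $q$ is discarded by the cycle test. (Minor slip: what you announce as ``the contrapositive'' is in fact the direct implication with an internal contradiction; this doesn't affect the mathematics.)

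There is, however, a concrete soft spot in your invocation of Lemma~\ref{graph-lemma-nonw3}. That lemma is stated for a \emph{partition} of a compact forward-invariant set. You apply it to $K=S_{i_0}^{(\alpha)}$ with the collection $\{v\cap S_{i_0}^{(\alpha)}:v\in\V_k\}$, but once vertices have been deleted by the basin test, $|\V_k|$ need no longer cover $S_{i_0}^{(\alpha)}$ — the deleted vertices may well intersect it — so that collection is not a partition of $S_{i_0}^{(\alpha)}$ and the lemma does not apply verbatim. Moreover, even where it does apply, Lemma~\ref{graph-lemma-nonw3} delivers a cycle through \emph{some} element of $\tilde{\P}_q$, not necessarily through the particular $v_k$ you fixed; the paper even flags this (``The theorem does not imply that at one step each vertex containing $q$ is kept''), and the correct invariant is only that \emph{some} vertex containing $q$ survives. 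Both points can be repaired — one can show, using a trapping neighbourhood of $\underline{\alpha}$ together with uniform attraction on the compact set $[U]$ and the backward convergence $F_\alpha^{N_k-i}(x_k)\to q^{(i)}\notin\overline{\mathcal{B}(\underline{\alpha})}$, that for large $k$ the witnessing trajectories avoid the basin-removed region entirely, so the cycle construction of Lemma~\ref{graph-lemma-nonw3} survives the additional removal rule — but this is precisely the interaction between the two removal criteria that needs to be argued, and your write-up (like the paper's, which simply points to Theorem~\ref{graph-thm-nonw} as if the extra criterion were harmless) does not do so. In short: same route as the paper, with a named but unclosed gap at the point where the cycle-preservation argument must be checked against the basin-removal rule.
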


We implemented our program in C++, using the CAPD Library \cite{CAPD} for rigorous computations, and the Boost Graph Library \cite{Boost_Graph} for handling the directed graphs. The recursion number in Tarjan's algorithm was very high, therefore we converted it into a sequential program, using virtual stack structures from the Standard Library in order to avoid overflows. Instead of simulating the Ricker-map itself, we used its third iterate, the formula is still compact enough not to cause big overestimation in interval arithmetics and it considerably speeds up the calculations.

As an example, we ran our program for the parameter slice $[0.9, 0.90001]$, with $\delta = 10^{-1}$ as the initial diameter for the partition. We show the evolution of the enclosure during the first 8 iterations on Figure \ref{graph-fig-iteration}.
\begin{figure}[H]
    \centering
    \includegraphics[scale=0.3]{./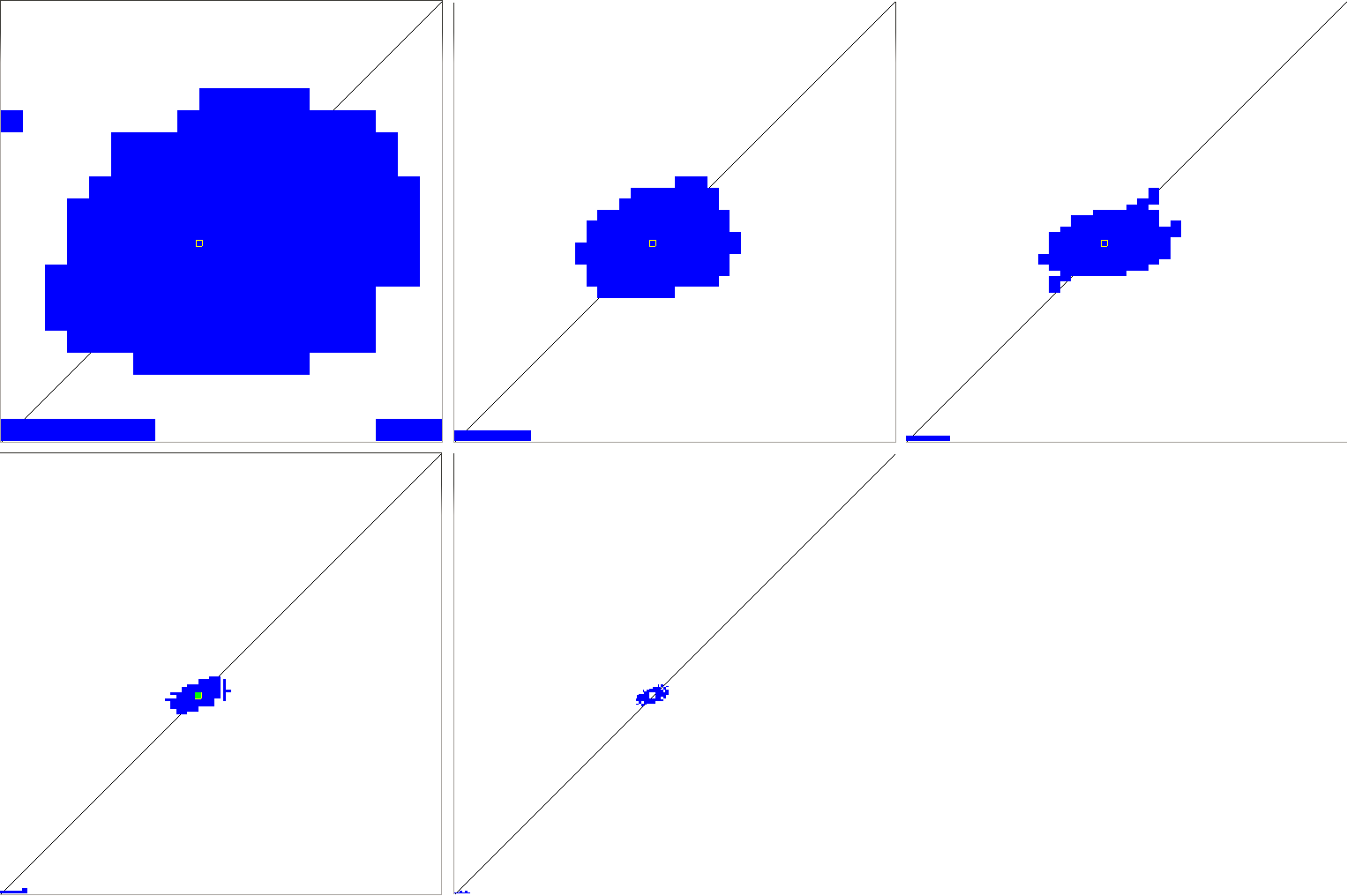}
    \caption{\label{graph-fig-iteration}Enclosure after 1, 3, 4, 6, 8 steps}
\end{figure}
\noindent
The small rectangle is the attracting neighbourhood $[U]$. After 6 iterations the diameter of the partition is sufficiently small in order to have some boxes removed from the inside even though they are in directed cycles. This happens because they are contained in, or get mapped into the basin of attraction of the fixed point.

We used different sizes for the parameter intervals and ran the computations on a cluster of the NIIF HPC centre at the University of Szeged (48 cores, 128 GB memory / cluster) parallelising it with OpenMP. We summarise some technical details in Table \ref{ricker-app-table-resources}.
\begin{table}[H]
\centering
  \begin{tabular}{| l | l | l | l | l | l |}
    \hline
    parameter & size of slices & \# of CPU & max. memory & wall clock time & total time\\ \hline \hline
    $[0.5, 0.875]$ & $10^{-3}$ & $48$ & $2.30$ GB & $1.2$s & $38.5$s \\ \hline
    $[0.875, 0.95]$ & $10^{-3}$ & $48$ & $3.05$ GB & $2.4$s & $52.3$s \\ \hline
    $[0.95, 0.99]$ & $10^{-4}$ & $48$ & $3.07$ GB & $33.7$s & $22$m $5.9$s \\ \hline
    $[0.99, 1]$ & $10^{-5}$ & $20$ & $65.30$ GB & $204$m $42.6$s & $1800$m $37.1$s \\ \hline
  \end{tabular}
\caption{Resources used by the program}
\label{ricker-app-table-resources}
\end{table}
\begin{remark}
  Here \emph{wall clock time (real)} refers to the actual running time of the process, whilst the \emph{total time (user + sys)} is the sum of the time spent on individual CPUs.
\end{remark}
\noindent
The complexity of the computations for some parameter slices is shown in Table \ref{ricker-app-table-complexity}.
\begin{table}[H]
\centering
  \begin{tabular}{| l | l | l | l | l |}
    \hline
    parameter slice & $[S]$ & \# iteration & max \# of vertices & max \# of edges \\ \hline \hline
    $[0.875, 0.876]$ & $[2.072e-04, 5.049031]^2$ & $13$ & $242$ & $1,676$ \\ \hline
    $[0.999, 0.99901]$ & $[2.928e-06, 7.369087]^2$ & $27$ & $729,528$ & $4,193,329$ \\ \hline
    $[0.99999, 1]$ & $[2.822e-06, 7.389015]^2$ & $33$ & $3,105,304$ & $118,751,916$ \\ \hline
  \end{tabular}
\caption{Complexity of the computations}
\label{ricker-app-table-complexity}
\end{table}
\noindent
The program ran successfully, thus we established that the fixed point is globally attracting for $\alpha \in [0.5, 1]$. Combining this with Proposition \ref{ricker-thm-trapping} and Proposition \ref{stable}, the proof of Theorem \ref{maintheorem} is completed.

{\small \section*{Acknowledgements}
\addcontentsline{toc}{section}{Acknowledgements}
The first author was supported by the Bergen Research Foundation. The second and third authors were supported by the Hungarian Scientific Research Fund, Grant No. K 75517. {\'A}bel Garab was also supported by the European Union and co-funded by the European Social Fund. Project title: ``Broadening the knowledge base and supporting the long term professional sustainability of the Research University Centre of Excellence at the University of Szeged by ensuring the rising generation of excellent scientists.'' Project number: T{\'A}MOP-4.2.2/B-10/1-2010-0012.

The computations were performed on the HPC center provided by the Hungarian National Information Infrastructure Development Institute \cite{NIIF} at the University of Szeged.

The authors thank Warwick Tucker from the CAPA group \cite{CAPA}, Daniel Wilczak and Tomasz Kapela, members of the CAPD group \cite{CAPD} for useful suggestions and their help.

\appendix
\section{Appendix}

\begin{dmath}\label{app_g}
g(z,\bar{z},\alpha)=\frac{1}{2 \sqrt{-1+4 \alpha }}\left(-i+\sqrt{-1+4 \alpha }\right) \left(\left(-1+e^{-\frac{z-i z \sqrt{-1+4 \alpha }+\bar{z}+i \sqrt{-1+4 \alpha } \bar{z}}{2 \alpha }}\right) \left(z+\bar{z}\right)+\alpha  \left(-1+e^{-\frac{z-i z \sqrt{-1+4 \alpha }+\bar{z}+i \sqrt{-1+4 \alpha } \bar{z}}{2 \alpha }}+\frac{2 z}{1+i \sqrt{-1+4 \alpha }}+\frac{2 i \bar{z}}{i+\sqrt{-1+4 \alpha }}\right)\right)
\end{dmath}

\begin{equation}\label{app_g20}
g_{20}(\alpha)=-\frac{1}{2}+\frac{3 i}{2 \sqrt{-1+4 \alpha }}
\end{equation}

\begin{equation}
g_{11}(\alpha)=\frac{2 (-1+\alpha )}{-1+4 \alpha +i \sqrt{-1+4 \alpha }}
\end{equation}

\begin{equation}
g_{02}(\alpha)=\frac{i \alpha  \left(3 i+\sqrt{-1+4 \alpha }\right)}{\sqrt{-1+4 \alpha } \left(i-2 i \alpha +\sqrt{-1+4 \alpha }\right)}
\end{equation}

\begin{equation}
g_{30}(\alpha)=-\frac{i \left(1+\alpha -i \sqrt{-1+4 \alpha }\right)}{\alpha  \sqrt{-1+4 \alpha }}
\end{equation}

\begin{equation}
g_{21}(\alpha)=\frac{-3 i+2 i \alpha +\sqrt{-1+4 \alpha }}{2 \alpha  \sqrt{-1+4 \alpha }}
\end{equation}

\begin{equation}
g_{12}(\alpha)=\frac{2 \left(3 i-2 i \alpha +\sqrt{-1+4 \alpha }\right)}{\sqrt{-1+4 \alpha } \left(i+\sqrt{-1+4 \alpha }\right)^2}
\end{equation}

\begin{equation}\label{app_g03}
g_{03}(\alpha)=\frac{4 i \alpha  \left(5 i+\sqrt{-1+4 \alpha }\right)}{\sqrt{-1+4 \alpha } \left(i+\sqrt{-1+4 \alpha }\right)^3}
\end{equation}

\begin{equation}\label{app_g40}
g_{40}(\alpha)=\frac{8 \left(-2+\alpha -2 i \sqrt{-1+4 \alpha }\right)}{\sqrt{-1+4 \alpha } \left(-i+\sqrt{-1+4 \alpha }\right)^3}
\end{equation}

\begin{equation}
g_{31}(\alpha)=-\frac{2 \left(-2 i+i \alpha +\sqrt{-1+4 \alpha }\right)}{\alpha  \left(-i+4 i \alpha +\sqrt{-1+4 \alpha }\right)}
\end{equation}

\begin{equation}
g_{22}(\alpha)=\frac{2 (-2+\alpha )}{\alpha  \left(-1+4 \alpha +i \sqrt{-1+4 \alpha }\right)}
\end{equation}

\begin{equation}
g_{13}(\alpha)=\frac{8 \left(2-\alpha -i \sqrt{-1+4 \alpha }\right)}{\sqrt{-1+4 \alpha } \left(i+\sqrt{-1+4 \alpha }\right)^3}
\end{equation}

\begin{equation}\label{app_g04}
g_{04}(\alpha)=\frac{8 \alpha  \left(7 i+\sqrt{-1+4 \alpha }\right)}{\sqrt{-1+4 \alpha } \left(i+\sqrt{-1+4 \alpha }\right)^4}
\end{equation}

\begin{dgroup*}

\begin{dmath}\label{app_hinv(z)Hij}
h_{inv,\alpha}(z)=
z-\frac{h_{20}(\alpha) z^2}{2}+\frac{1}{6} z^3 \left(3 h_{20}(\alpha)^2-h_{30}(\alpha)+3 h_{11}(\alpha) \overline{h_{02}(\alpha)}\right)+\frac{1}{24} z^4 \left(-15 h_{20}(\alpha)^3+10 h_{20}(\alpha) h_{30}(\alpha)-30 h_{11}(\alpha) h_{20}(\alpha) \overline{h_{02}(\alpha)}-3 h_{02}(\alpha) \overline{h_{02}(\alpha)}^2
+4 h_{11}(\alpha) \overline{h_{03}(\alpha)}-12 h_{11}(\alpha) \overline{h_{02}(\alpha)} \overline{h_{11}(\alpha)}\right)-h_{11}(\alpha) z \bar{z}+\frac{1}{2} z^2 \left(3 h_{11}(\alpha) h_{20}(\alpha)+h_{02}(\alpha) \overline{h_{02}(\alpha)}+2 h_{11}(\alpha) \overline{h_{11}(\alpha)}\right) \bar{z}+\frac{1}{6} z^3 \left(-15 h_{11}(\alpha) h_{20}(\alpha)^2+4 h_{11}(\alpha) h_{30}(\alpha)-12 h_{11}(\alpha)^2 \overline{h_{02}(\alpha)}+3 h_{12}(\alpha) \overline{h_{02}(\alpha)}-6 h_{02}(\alpha) h_{20}(\alpha) \overline{h_{02}(\alpha)}+h_{02}(\alpha) \overline{h_{03}(\alpha)}-12 h_{11}(\alpha) h_{20}(\alpha) \overline{h_{11}(\alpha)}-6 h_{02}(\alpha) \overline{h_{02}(\alpha)} \overline{h_{11}(\alpha)}-6 h_{11}(\alpha) \overline{h_{11}(\alpha)}^2+3 h_{11}(\alpha) \overline{h_{12}(\alpha)}-3 h_{11}(\alpha) \overline{h_{02}(\alpha)} \overline{h_{20}(\alpha)}\right) \bar{z}-\frac{h_{02}(\alpha) \bar{z}^2}{2}+\frac{1}{2} z \left(2 h_{11}(\alpha)^2-h_{12}(\alpha)+h_{02}(\alpha) h_{20}(\alpha)+2 h_{02}(\alpha) \overline{h_{11}(\alpha)}+h_{11}(\alpha) \overline{h_{20}(\alpha)}\right) \bar{z}^2+\frac{1}{4} z^2 \left(-12 h_{11}(\alpha)^2 h_{20}(\alpha)+3 h_{12}(\alpha) h_{20}(\alpha)-3 h_{02}(\alpha) h_{20}(\alpha)^2+h_{02}(\alpha) h_{30}(\alpha)+h_{03}(\alpha) \overline{h_{02}(\alpha)}-9 h_{02}(\alpha) h_{11}(\alpha) \overline{h_{02}(\alpha)}-12 h_{11}(\alpha)^2 \overline{h_{11}(\alpha)}+4 h_{12}(\alpha) \overline{h_{11}(\alpha)}-6 h_{02}(\alpha) h_{20}(\alpha) \overline{h_{11}(\alpha)}-6 h_{02}(\alpha) \overline{h_{11}(\alpha)}^2+2 h_{02}(\alpha) \overline{h_{12}(\alpha)}-3 h_{11}(\alpha) h_{20}(\alpha) \overline{h_{20}(\alpha)}-3 h_{02}(\alpha) \overline{h_{02}(\alpha)} \overline{h_{20}(\alpha)}-6 h_{11}(\alpha) \overline{h_{11}(\alpha)} \overline{h_{20}(\alpha)}\right) \bar{z}^2+\frac{1}{6} \left(-h_{03}(\alpha)+3 h_{02}(\alpha) h_{11}(\alpha)+3 h_{02}(\alpha) \overline{h_{20}(\alpha)}\right) \bar{z}^3+\frac{1}{6} z \left(-6 h_{11}(\alpha)^3+6 h_{11}(\alpha) h_{12}(\alpha)+h_{03}(\alpha) h_{20}(\alpha)-9 h_{02}(\alpha) h_{11}(\alpha) h_{20}(\alpha)-3 h_{02}(\alpha)^2 \overline{h_{02}(\alpha)}+3 h_{03}(\alpha) \overline{h_{11}(\alpha)}-18 h_{02}(\alpha) h_{11}(\alpha) \overline{h_{11}(\alpha)}-6 h_{11}(\alpha)^2 \overline{h_{20}(\alpha)}+3 h_{12}(\alpha) \overline{h_{20}(\alpha)}-3 h_{02}(\alpha) h_{20}(\alpha) \overline{h_{20}(\alpha)}-12 h_{02}(\alpha) \overline{h_{11}(\alpha)} \overline{h_{20}(\alpha)}-3 h_{11}(\alpha) \overline{h_{20}(\alpha)}^2+h_{11}(\alpha) \overline{h_{30}(\alpha)}\right) \bar{z}^3+\frac{1}{24} \left(4 h_{03}(\alpha) h_{11}(\alpha)-12 h_{02}(\alpha) h_{11}(\alpha)^2+6 h_{02}(\alpha) h_{12}(\alpha)-3 h_{02}(\alpha)^2 h_{20}(\alpha)-12 h_{02}(\alpha)^2 \overline{h_{11}(\alpha)}+6 h_{03}(\alpha) \overline{h_{20}(\alpha)}-18 h_{02}(\alpha) h_{11}(\alpha) \overline{h_{20}(\alpha)}-15 h_{02}(\alpha) \overline{h_{20}(\alpha)}^2+4 h_{02}(\alpha) \overline{h_{30}(\alpha)}\right) \bar{z}^4
\end{dmath}
\end{dgroup*}

\begin{equation}\label{app_h20}
h_{20}(\alpha)=\frac{2 \left(1+\sqrt{1-4 \alpha }-\alpha \right)}{\alpha  \left(1-4 \alpha +i \sqrt{-1+4 \alpha }\right)}
\end{equation}

\begin{equation}\label{app_h11}
h_{11}(\alpha)=\frac{2 (-1+\alpha )}{\alpha  \left(-1+4 \alpha -i \sqrt{-1+4 \alpha }\right)}
\end{equation}

\begin{equation}\label{app_h02}
h_{02}(\alpha)=\frac{2 \alpha  \left(3 i+\sqrt{-1+4 \alpha }\right)}{\left(i+\sqrt{-1+4 \alpha }\right)^2 \left(-i+4 i \alpha +\alpha  \sqrt{-1+4 \alpha }\right)}
\end{equation}

\begin{dgroup*}
\begin{dmath*}
h_{30}(\alpha)=
\end{dmath*}
\begin{dmath}[breakdepth={2}]\label{app_h30}
\left(16 \left(-12-12 i \sqrt{-1+4 \alpha }+\alpha  \left(45+21 i \sqrt{-1+4 \alpha }+\alpha  \left(-27+2 \alpha -5 i \sqrt{-1+4 \alpha }\right)\right)\right)\right)\\
\left(\left(1+\sqrt{1-4 \alpha }\right)^4 \sqrt{-1+4 \alpha } \left(i+\sqrt{-1+4 \alpha}+\alpha  \left(-2 i+\alpha  \left(-5 i+\sqrt{-1+4 \alpha }\right)\right)\right)\right)^{-1}
\end{dmath}
\end{dgroup*}

\begin{dgroup*}
\begin{dmath*}
h_{12}(\alpha)=
\end{dmath*}
\begin{dmath}[breakdepth={2}]\label{app_h12}
\left(\left(4 i+\alpha  \left(-4 \left(5 i+3 \sqrt{-1+4 \alpha }\right)+\alpha  \left(28 i+18 \sqrt{-1+4 \alpha }+\alpha  \left(-31 i-9 \sqrt{-1+4 \alpha }+\\
\alpha  \left(9 i+\sqrt{-1+4 \alpha }\right)\right)\right)\right)\right)\right)
\left(\left(\alpha ^3 \left(-i+\sqrt{-1+4 \alpha }+\alpha  \left(6 i+\alpha  \left(-13 i-15 \sqrt{-1+4 \alpha }+\\
2 \alpha  \left(10 i+\sqrt{-1+4 \alpha }\right)\right)\right)\right)\right)\right)^{-1}
\end{dmath}
\end{dgroup*}

\begin{dgroup*}
\begin{dmath*}
h_{03}(\alpha)=
\end{dmath*}
\begin{dmath}[breakdepth={2}]\label{app_h03}
\left(\left(2 \left(5 i+7 \sqrt{-1+4 \alpha }+\alpha  \left(-29 i-14 \sqrt{-1+4 \alpha }+\alpha  \left(17 i-i \alpha +3 \sqrt{-1+4 \alpha }\right)\right)\right)\right)\right)\\
\left(\left(i+\sqrt{-1+4 \alpha }+\alpha  \left(-2 \left(7 i+6 \sqrt{-1+4 \alpha }\right)+\alpha  \left(70 i+46 \sqrt{-1+4 \alpha }+\\
\alpha  \left(-6 \left(23 i+10 \sqrt{-1+4 \alpha }\right)+\alpha  \left(73 i-4 i \alpha +13 \sqrt{-1+4 \alpha }\right)\right)\right)\right)\right)\right)^{-1}
\end{dmath}
\end{dgroup*}

\begin{dgroup*}
\begin{dmath*}
\left|h_{inv}^{20}(\alpha)\right|+\left|h_{inv}^{11}(\alpha)\right|+\left|h_{inv}^{02}(\alpha)\right|=
\end{dmath*}
\begin{dmath}\label{app_hinv2}
\left(-2 (-1+\alpha ) \alpha +\sqrt{\alpha ^3 (2+\alpha )}+\sqrt{\frac{\alpha ^5 (2+\alpha )}{-1+\alpha  (4+\alpha )}}\right)\left(2 \sqrt{\alpha ^5 (-1+4 \alpha )}\right)^{-1}
\end{dmath}
\end{dgroup*}

\begin{dgroup*}
\begin{dmath}\label{app_hinv3}
\left|h_{inv}^{30}(\alpha)\right|+\left|h_{inv}^{21}(\alpha)\right|+\left|h_{inv}^{12}(\alpha)\right|+
\left|h_{inv}^{03}(\alpha)\right|=\\
\end{dmath}
\begin{dmath*}
\frac{1}{6} \sqrt{\frac{36+15 \alpha -12 \alpha ^2+4 \alpha ^3}{\alpha ^5 \left(-2+7 \alpha +4 \alpha ^2\right)}}
+
\sqrt{\frac{6-40 \alpha +75 \alpha ^2-21 \alpha ^4+4 \alpha ^5+4 \alpha ^6}{(2-15 \alpha +22 \alpha ^2+23 \alpha ^3+4 \alpha ^4)4\alpha^6}}
+
\frac{1}{6} \sqrt{\frac{12-54 \alpha +66 \alpha ^2-13 \alpha ^3+4 \alpha ^4+4 \alpha ^5}{\alpha ^3 \left(-1+13 \alpha -57 \alpha ^2+88 \alpha ^3-18 \alpha ^4+7 \alpha ^5+4 \alpha ^6\right)}}
+
\frac{1}{2} \sqrt{\frac{2-16 \alpha +13 \alpha ^2+124 \alpha ^3-198 \alpha ^4+15 \alpha ^5+54 \alpha ^6+9 \alpha ^7}{\alpha ^6 (-1+4 \alpha ) \left(-1+4 \alpha +\alpha ^2\right)^2}}
\end{dmath*}
\end{dgroup*}

\begin{dgroup*}
\begin{dmath*}
|h_{inv}^{40}(\alpha)|=
\end{dmath*}
\begin{dmath}
\frac{1}{24} \left(-1140+10874 \alpha -12660 \alpha ^2-143073 \alpha ^3+423177 \alpha ^4-211261 \alpha ^5+1356 \alpha ^6+30869 \alpha ^7-13766 \alpha ^8-2238 \alpha ^9+1489 \alpha ^{10}+256 \alpha ^{11}\right)^{\frac{1}{2}}
\left((1-4 \alpha )^2 \alpha ^6 \left(-1+4 \alpha +\alpha ^2\right)^3 \left(-1+5 \alpha -2 \alpha ^2+\alpha ^3\right)\right)^{-\frac{1}{2}}
\end{dmath}
\end{dgroup*}

\begin{dgroup*}
\begin{dmath*}
|h_{inv}^{31}(\alpha)|=
\end{dmath*}
\begin{dmath}
\left(-36+912 \alpha -9354 \alpha ^2+49026 \alpha ^3-133548 \alpha ^4+155248 \alpha ^5+24851 \alpha ^6-182342 \alpha ^7+127014 \alpha ^8-47122 \alpha ^9-12543 \alpha ^{10}+34528 \alpha ^{11}-329 \alpha ^{12}+1925 \alpha ^{13}+2452 \alpha ^{14}+361 \alpha ^{15}\right)^{\frac{1}{2}}
\left(-1+5 \alpha -2 \alpha ^2+\alpha ^3\right)^{-\frac{1}{2}}
\left(6 \alpha ^5 \left(2-15 \alpha +22 \alpha ^2+23 \alpha ^3+4 \alpha ^4\right)\right)^{-1}
\end{dmath}
\end{dgroup*}

\begin{dgroup*}
\begin{dmath*}
|h_{inv}^{22}(\alpha)|=
\end{dmath*}
\begin{dmath}
\left(-16+4 \alpha +2240 \alpha ^2-14868 \alpha ^3+19782 \alpha ^4+91297 \alpha ^5-309731 \alpha ^6+259610 \alpha ^7+81080 \alpha ^8-147591 \alpha ^9+12815 \alpha ^{10}+19871 \alpha ^{11}-8236 \alpha ^{12}+241 \alpha ^{13}+2132 \alpha ^{14}+361 \alpha ^{15}\right)
\left(-1+5 \alpha -2 \alpha ^2+\alpha ^3\right)^{-\frac{1}{2}}
\left(4 \alpha ^5 \left(2-15 \alpha +22 \alpha ^2+23 \alpha ^3+4 \alpha ^4\right)\right)^{-1}
\end{dmath}
\end{dgroup*}

\begin{dgroup*}
\begin{dmath*}
|h_{inv}^{13}(\alpha)|=
\end{dmath*}
\begin{dmath}
\frac{1}{6} \left(\left(36-840 \alpha +7728 \alpha ^2-35454 \alpha ^3+84157 \alpha ^4-96139 \alpha ^5+39017 \alpha ^6+15361 \alpha ^7-22836 \alpha ^8+10489 \alpha ^9+5142 \alpha ^{10}-397 \alpha ^{11}+922 \alpha ^{12}+529 \alpha ^{13}+64 \alpha ^{14}\right)\right)^{\frac{1}{2}}
\left(\alpha ^9 \left(-1+4 \alpha +\alpha ^2\right)^3 \left(2-17 \alpha +35 \alpha ^2+4 \alpha ^3-\alpha ^4+4 \alpha ^5\right)\right)^{-\frac{1}{2}}
\end{dmath}
\end{dgroup*}

\begin{dgroup*}
\begin{dmath*}
|h_{inv}^{04}(\alpha)|=
\end{dmath*}
\begin{dmath}\label{app_hinv_04}
\frac{1}{24} \left(36-396 \alpha +1350 \alpha ^2-1422 \alpha ^3+318 \alpha ^4+441 \alpha ^5-145 \alpha ^6+100 \alpha ^7+25 \alpha ^8\right)^{\frac{1}{2}}
\left(\alpha ^6 \left(-1+4 \alpha +\alpha ^2\right)^2 \left(1-9 \alpha +22 \alpha ^2-9 \alpha ^3+4 \alpha ^4\right)\right)^{-\frac{1}{2}}
\end{dmath}
\end{dgroup*}

\begin{dgroup*}
\begin{dmath*}
|r_2^{40}(\alpha)|=
\end{dmath*}
\begin{dmath}\label{app_r2_40}
\frac{1}{24}\left(-432+2808 \alpha -5016 \alpha ^2+4692 \alpha ^3-2584 \alpha ^4+930 \alpha ^5-210 \alpha ^6+7 \alpha ^7+27 \alpha ^8-6 \alpha ^9+\alpha ^{10}\right)^{\frac{1}{2}}\\
\left(\alpha ^4 \left(-1+4 \alpha +\alpha ^2\right)^2 \left(1-9 \alpha +22 \alpha ^2-9 \alpha ^3+4 \alpha ^4\right)\right)^{-\frac{1}{2}}
\end{dmath}
\end{dgroup*}

\begin{dgroup*}
\begin{dmath*}
|r_2^{31}(\alpha)|=
\end{dmath*}
\begin{dmath}
\frac{1}{6}\left(90-810 \alpha +1635 \alpha ^2+1485 \alpha ^3-3462 \alpha ^4+4056 \alpha ^5-659 \alpha ^6-108 \alpha ^7-320 \alpha ^8+24 \alpha ^9+87 \alpha ^{10}-102 \alpha ^{11}+36 \alpha ^{12}+\alpha ^{13}\right)^{\frac{1}{2}}\\
\left(\alpha ^7 \left(2-17 \alpha +33 \alpha ^2+13 \alpha ^3+4 \alpha ^5+\alpha ^6\right)(-1+4\alpha)^2\right)^{-\frac{1}{2}}
\end{dmath}
\end{dgroup*}

\begin{dgroup*}
\begin{dmath*}
|r_2^{22}(\alpha)|=
\end{dmath*}
\begin{dmath}
\frac{1}{4}\left(64-608 \alpha +1348 \alpha ^2+728 \alpha ^3-1692 \alpha ^4+1948 \alpha ^5-688 \alpha ^6-2008 \alpha ^7+1184 \alpha ^8+742 \alpha ^9-355 \alpha ^{10}-136 \alpha ^{11}+34 \alpha ^{12}+14 \alpha ^{13}+\alpha ^{14}\right)^{\frac{1}{2}}\\
\left(\alpha ^9 (-1+4 \alpha ) \left(-2+7 \alpha +6 \alpha ^2+\alpha ^3\right)^2\right)^{-\frac{1}{2}}
\end{dmath}
\end{dgroup*}

\begin{dgroup*}
\begin{dmath*}
|r_2^{13}(\alpha)|=
\end{dmath*}
\begin{dmath}
\frac{1}{6}\left(-54+918 \alpha -5973 \alpha ^2+18921 \alpha ^3-32250 \alpha ^4+32742 \alpha ^5-15643 \alpha ^6+2506 \alpha ^7+3246 \alpha ^8-3551 \alpha ^9+1327 \alpha ^{10}-156 \alpha ^{11}-48 \alpha ^{12}+59 \alpha ^{13}+16 \alpha ^{14}+\alpha ^{15}\right)^{\frac{1}{2}}\\
\left(\alpha ^7 \left(-2+9 \alpha +\alpha ^2+\alpha ^4\right)(6-48 \alpha +90 \alpha ^2+4 \alpha ^3)^2\right)^{-\frac{1}{2}}
\end{dmath}
\end{dgroup*}

\begin{dgroup*}
\begin{dmath*}
|r_2^{40}(\alpha)|=
\end{dmath*}
\begin{dmath}\label{app_r2_04}
\frac{1}{24}\left(-432+2808 \alpha -5016 \alpha ^2+4692 \alpha ^3-2584 \alpha ^4+930 \alpha ^5-210 \alpha ^6+7 \alpha ^7+27 \alpha ^8-6 \alpha ^9+\alpha ^{10}\right)^{\frac{1}{2}}\\
\left(\alpha ^4 \left(-1+4 \alpha +\alpha ^2\right)^2 \left(1-9 \alpha +22 \alpha ^2-9 \alpha ^3+4 \alpha ^4\right)\right)^{-\frac{1}{2}}
\end{dmath}
\end{dgroup*}

\begin{dgroup*}
\begin{dmath}\label{app_c1}
c_1(\alpha)=
\end{dmath}
\begin{dmath*}
\left(2 i-2 \sqrt{-1+4 \alpha }+\alpha  \left(2 \left(-7 i+5 \sqrt{-1+4 \alpha }\right)+\alpha  \left(25 i-13 \sqrt{-1+4 \alpha }+\alpha  \left(-25 i+7 \sqrt{-1+4 \alpha }-\alpha  \left(-7 i+\sqrt{-1+4 \alpha }\right)\right)\right)\right)\right)\\
\left(2 \alpha ^3 \sqrt{-1+4 \alpha } \left(-i+\sqrt{-1+4 \alpha }\right) \left(i \alpha +\sqrt{-1+4 \alpha }\right)\right)^{-1}
\end{dmath*}
\end{dgroup*}

}

\bibliographystyle{plain}
\bibliography{ricker2012}
\addcontentsline{toc}{section}{References}

\end{document}